\newtheorem{theorem}{Theorem}[section]
\newaliascnt{lemma}{theorem}
\newtheorem{lemma}[lemma]{Lemma}
\newaliascnt{corollary}{theorem}
\newtheorem{corollary}[corollary]{Corollary}
\newaliascnt{proposition}{theorem}
\theoremstyle{definition}
\newaliascnt{definition}{theorem}
\theoremstyle{remark}
\newaliascnt{remark}{theorem}
\newtheorem{remark}[remark]{Remark}
\newtheorem{conjecture}{Conjecture} % independent counter
\Crefname{theorem}{Theorem}{Theorems}
\Crefname{lemma}{Lemma}{Lemmas}
\Crefname{corollary}{Corollary}{Corollaries}
\Crefname{proposition}{Proposition}{Propositions}
\Crefname{definition}{Definition}{Definitions}
\Crefname{remark}{Remark}{Remarks}
\Crefname{example}{Example}{Examples}
\Crefname{conjecture}{Conjecture}{Conjectures}
\titleformat{\section}[block]{\normalfont\Large\bfseries}{\thesection}{1em}{}
\titlespacing*{\section}{0pt}{10pt plus 4pt minus 2pt}{7pt plus 4pt minus 2pt}
\titleformat{\subsection}[block]{\normalfont\large\bfseries}{\thesubsection}{1em}{}
\titlespacing*{\subsection}{0pt}{8pt plus 3pt minus 2pt}{5pt plus 2pt minus 1pt}
\numberwithin{equation}{section}
\begin{document}
\title[Sharp Coefficient Bounds for certain $q$-Starlike Functions]{Sharp Coefficient Bounds for certain $q$-Starlike Functions}
%%%%%%%%%%%%%-author 1
\author[S. Sivaprasad Kumar]{S. Sivaprasad Kumar$^1$}
 \author[Snehal Pannu]{Snehal Pannu$^2$}
 
\address{$1,2$ Department of Applied Mathematics, Delhi Technological University, Bawana Road, Delhi-110042, INDIA}

\email{spkumar@dce.ac.in}
\email{ms.snehal$\_$25phdam07@dtu.ac.in}

%%%%%%%%%%%%%%%%%%%%%%%%%%%%%%%%%%%%%%%%%%%%%%%%%%%%%%	
\subjclass[2020]{05A30 · 30C45 · 30C50 }
\keywords{Analytic function · Starlike functions ·  $q$-derivative  · Subordination · Convolution  · Hankel Determinant · Toeplitz determinant}

\begin{abstract}
Geometric function theory increasingly draws on $q$-calculus to model discrete and quantum-inspired phenomena. Motivated by this, the present paper introduces   new subclasses of analytic functions: the class $\mathcal{S}^{*}_{\xi_q}$ of $q$-starlike functions associated with the Ma-Minda function $\xi_q(z)$, and its limiting classical counterpart $\mathcal{S}^{*}_{\xi}$ associated with $\xi(z)$, where $q \in (0,1)$. We systematically establish sharp coefficient estimates including the Fekete-Szeg\"{o}, Hankel and Toeplitz determinants.  We establish the sharpness of the 
$q$-coefficient estimates using a newly derived integral representation, which offers a more effective alternative to the conventional convolution-based extremal construction. It is further shown that all 
$q$-results reduce to their classical counterparts as $q \to 1^{-}$.
\end{abstract}

\maketitle

\section{Introduction}
\label{intro}
Let \(\mathcal{A}\) denote the family of all normalized analytic functions \(f\) defined on the open unit disk \(\mathbb{D} = \{ z \in \mathbb{C} : |z| < 1 \}\) with the Taylor series expansion
\begin{align}
f(z) = z + \sum_{n=2}^{\infty} a_n z^n\,. \label{1.1}
\end{align}

Let \(\mathcal{P}\) be the class of Carathéodory functions, consisting of analytic functions \(p\) defined on \(\mathbb{D}\) of the form
\begin{align}
    p(z) = 1 + \sum_{n=1}^\infty c_n z^n\quad (z \in \mathbb{D}), \label{2.1}
\end{align}
satisfying \(\Re(p(z)) > 0\) and \(p(0)=1\). Furthermore, let \(\mathcal{B}_0\) denote the class of Schwarz functions \(w\) analytic in \(\mathbb{D}\) with the expansion
\begin{align}
    w(z)&=\sum_{n=1}^\infty b_n z^n \quad (z \in \mathbb{D}), \label{schwarz}
\end{align}
where \(w(0)=0\) and \(|w(z)| < 1\).

Let \(\mathcal{S}\) be the subclass of \(\mathcal{A}\) consisting of univalent functions. The Hadamard product (or convolution) of two functions \(f, g \in \mathcal{A}\), where $f$ is given by \eqref{1.1} and \(g(z) = z + \sum_{n=2}^\infty d_n z^n\), is defined as
\[
(f * g)(z) = z + \sum_{n=2}^\infty a_n d_n z^n.
\]
This operation provides a powerful tool for expressing linear operators; for instance, the derivative can be written as
\begin{align}
f'(z) = \frac{1}{z} \left( f(z) * \frac{z}{(1-z)^2} \right). \notag
\end{align}

Recently, Piejko \emph{et al}. \cite{15} introduced a generalized operator defined by
\begin{align}
    d_\eta f(z) = \frac{1}{z} \left( f(z) * \frac{z}{(1-\eta z)(1-z)} \right), \quad \eta \in \mathbb{C},\ |\eta| \leq 1. \label{df'}
\end{align}
This operator generalizes fundamental concepts in calculus. For \(\eta = 1\), it reduces to the standard derivative \(f'\). When \(\eta = q\) is a real number with \(0 < q < 1\), it yields the Jackson \(q\)-derivative:
\[
d_q f(z) =
\begin{cases}
\dfrac{f(z) - f(qz)}{(1-q)z}, & z \ne 0, \\
f'(0), & z = 0,
\end{cases}
\]
with the series representation \(d_q f(z) = \sum_{n=1}^{\infty} [n]_q a_n z^{n-1}, (a_1=1)\). Here, the \(q\)-number is given by \([n]_q = \sum_{n=0}^{n-1} q^n\) for \(n \in \mathbb{N}\). In particular, \(\lim_{q \to 1^-} d_q f(z) = f'(z)\).

The theory of 
$q$-calculus extends classical analysis by replacing conventional limit-based operators with a deformation parameter 
$q$, thereby introducing a discrete and scale-dependent framework. Since Jackson’s foundational work on 
$q$-differentiation and 
$q$-integration \cite{5,6}, this theory has evolved significantly and found applications in diverse areas such as optimal control, fractional calculus, and 
$q$-difference equations. The 
$q$-derivative operator plays a central role in the study of special functions, quantum theory, and statistical mechanics, where $q$-generalizations capture underlying quantum and non-uniform structures. Recent developments in this area can be seen in \cite{32,2,30,21,11}.

For two analytic functions \(f\) and \(g\), we say \(f\) is subordinate to \(g\), denoted by \(f \prec g\), if there exists a Schwarz function \(w(z) \in \mathcal{B}_0\) such that \(f(z) = g(w(z))\). If \(g\) is univalent in \(\mathbb{D}\), then \(f \prec g\) is equivalent to the conditions \(f(0) = g(0)\) and \(f(\mathbb{D}) \subseteq g(\mathbb{D})\).

\medskip

A fundamental subclass of \(\mathcal{S}\) is the class of starlike functions \(\mathcal{S}^*\), characterized analytically by
\[
\mathcal{S}^* = \left\{ f \in \mathcal{A} : \frac{z f'(z)}{f(z)} \prec \frac{1+z}{1-z} \right\}.
\]
Extensive research on starlike functions \cite{3,4,10,9,8,33} has established a robust theoretical foundation for their geometric and analytic properties. Ma and Minda \cite{24} unified this theory by introducing a general class:
\[
\mathcal{S}^*(\phi) = \left\{ f \in \mathcal{A} : \frac{z f'(z)}{f(z)} \prec \phi(z) \right\},
\]
where \(\phi\) is an analytic function with positive real part, \(\phi(0)=1\), \(\phi'(\mathbb{D})\) is starlike, symmetric about the real axis, and \(\phi'(0) > 0\). Numerous subclasses of starlike functions, now known as Ma-Minda classes, have been introduced by selecting specific \(\phi\) functions.

In this investigation, we consider the functions defined by
\begin{align}
\xi_q(z) &= 1 + \frac{\sin(qz)}{q(1 - qz)} \quad\text{and}\quad \xi(z) = 1 + \frac{\sin z}{1 - z}\quad (q \in (0,1),\, z \in \mathbb{D}). \notag 
\end{align}
Note that $\xi := \lim_{q \to 1^-} \xi_q$. As evidenced by ~\Cref{fig1} and~\Cref{fig2}, both $\xi_q$ and $\xi$ satisfy the criteria for Ma-Minda functions: they are analytic with positive real part, $\xi_q(0) = \xi(0) = 1$, their images are starlike with respect to 1 and symmetric about the real axis, and they have positive derivatives at the origin. In particular, for \(0<q<1\), the pole at \(z=1/q\) lies outside \(\mathbb{D}\), and hence \(\xi_q(\mathbb{D})\) is bounded. As \(q\to1^{-}\), the pole moves to \(z=1\in\partial\mathbb{D}\); consequently, \(\xi_q(\mathbb{D})\) becomes unbounded.
\begin{figure}[ht!]
    \centering
    %-- Left figure
    \begin{minipage}[t]{0.48\textwidth}
        \centering
        \includegraphics[width=\textwidth]{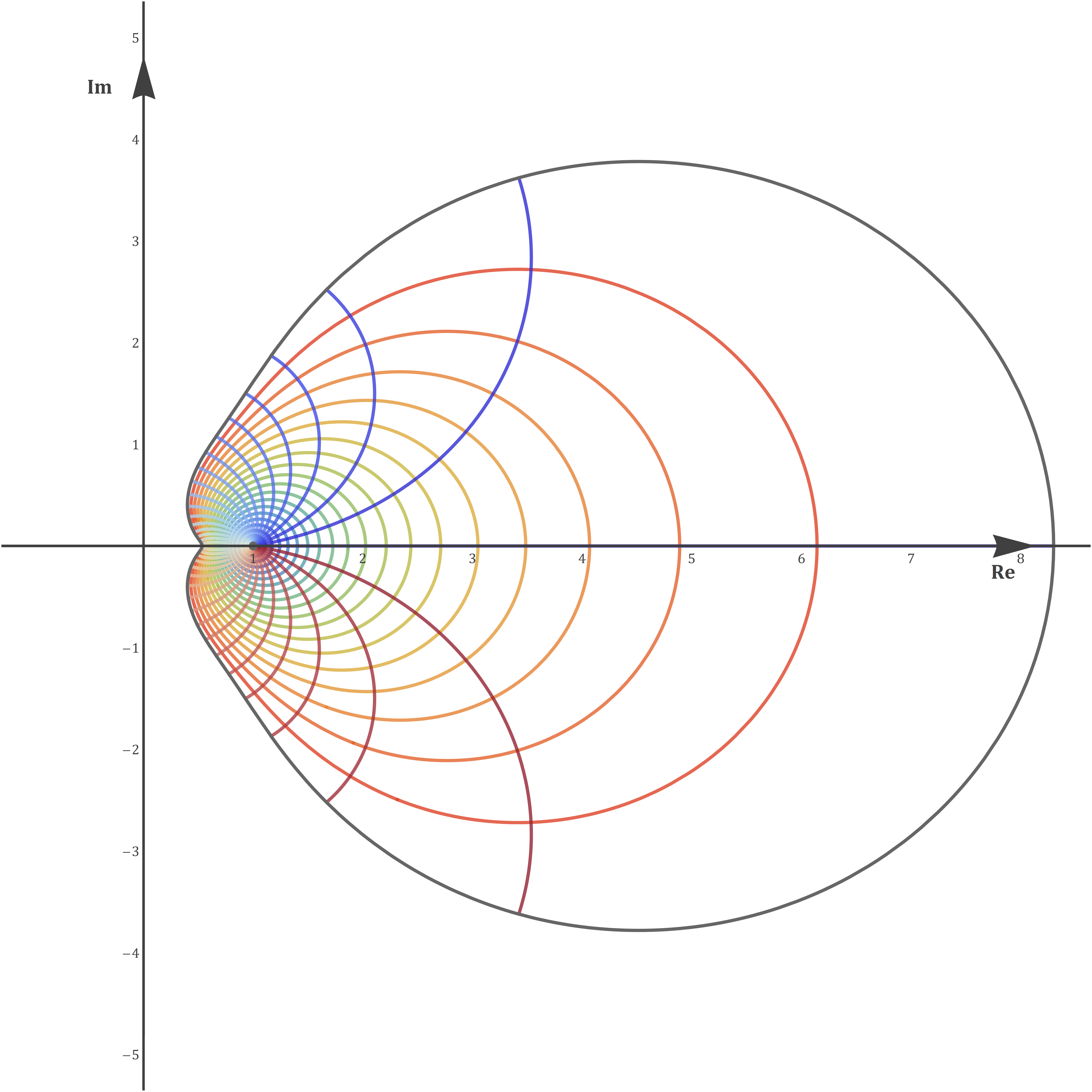}
        \caption{Image domain $\xi_{0.8}(\mathbb{D})$.}
        \label{fig1}
    \end{minipage}
    \hfill
    %-- Right figure
    \begin{minipage}[t]{0.48\textwidth}
        \centering
        \includegraphics[width=\textwidth]{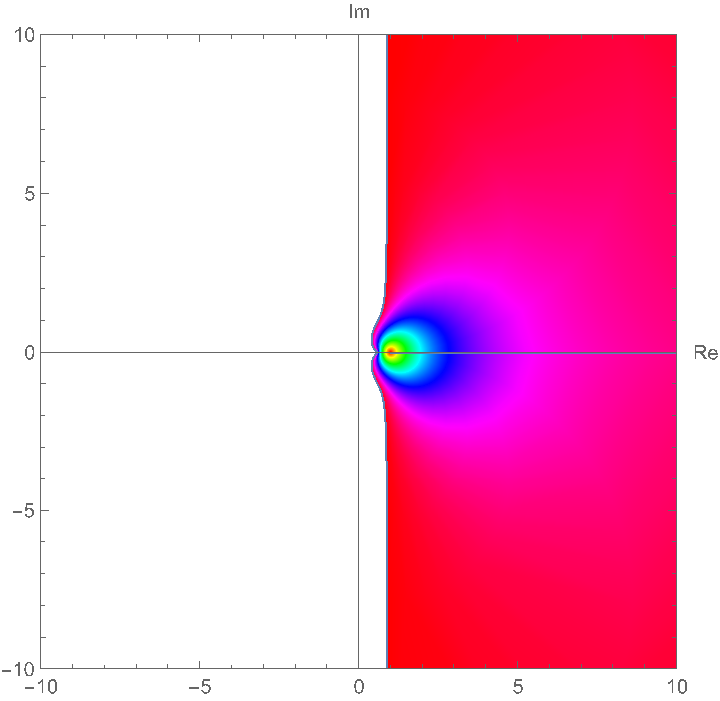}
        \caption{Image domain $\xi(\mathbb{D})$.}
        \label{fig2}
    \end{minipage}
\end{figure}

The series expansion of $\xi_q(z)$ is given by
\begin{align}
\xi_q(z) &= 1 + z + q z^2 + \frac{5}{6} q^2 z^3 + \frac{5}{6}q^3 z^4 + \frac{101}{120}q^4 z^5 + \cdots \quad (z \in \mathbb{D}), \label{phi_q_series}
\end{align}
while for $\xi(z)$ we obtain
\begin{align}
\xi(z) &= 1 + z + z^2 + \frac{5}{6}z^3 + \frac{5}{6}z^4 + \frac{101}{120}z^5 + \cdots \quad (z \in \mathbb{D}). \label{phi_series}
\end{align}

Motivated by the aforementioned Ma-Minda classes, we introduce the class of $q$-starlike functions associated with $\xi_q$:
\begin{align}
\mathcal{S}^*_{\xi_q} &= \left\{ f \in \mathcal{A} : \frac{z d_qf(z)}{f(z)} \prec \xi_q(z) \right\} \quad (z \in \mathbb{D}). \label{qstar}
\end{align}

Taking the limit as $q \to 1^-$, we obtain the corresponding class of starlike functions associated with $\xi$:
\begin{align}
\mathcal{S}^*_{\xi} &= \left\{ f \in \mathcal{A} : \frac{z f'(z)}{f(z)} \prec \xi(z) \right\} \quad (z \in \mathbb{D}). \label{star}
\end{align}

Since $\mathcal{S}^*_{\xi}$ satisfies the Ma-Minda condition corresponding to the function $\xi$, it follows that
\begin{theorem}\label{thm1.1}
    Let $f \in \mathcal{S}^*_{\xi}$. Then
    \begin{enumerate}
        \item  \textit{Subordination results}: 
    \( \frac{z\,f'(z)}{f(z)}\prec \frac{z\,   \tilde f'(z)}{\tilde f(z)} \quad\text{and}\quad \frac{f(z)}{z}\prec \frac{\tilde f(z)}{z}.\)

    \item  \textit{Growth theorem}: For $|z|=r<1$, \(
        -\tilde f(-r) \le |f(z)| \le \tilde f(r).
        \)

    \item  \textit{Distortion theorem}: For $|z|=r<1$,
        \(        -|1 - M(r)| \frac{\tilde f(-r)}{r} \le |f'(z)| \le |1 + M(r)| \frac{\tilde f(r)}{r},\) where $M(r):= \displaystyle\max_{|z|=r} \left|\frac{\sin(z)}{(1-z)}\right|$.

    \item \textit{Rotation theorem}: For $|z|=r<1$,
        \(
        \left| \arg\frac{f(z)}{z} \right| \leq \max_{|z|=r} \arg\frac{\tilde f(z)}{z}.
        \)

    \item \textit{Covering theorem}: The function $f$ is either a rotation of $\tilde f$, or its image contains the disk
        \(
        \{w \in \mathbb{C}: |w| < -\tilde f(-1)\},
        \)
        where $\tilde f(-1) = \displaystyle\lim_{r \to 1^{-}} \tilde f(-r)$.
    \end{enumerate}
\end{theorem}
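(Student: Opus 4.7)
The theorem packages the standard Ma--Minda consequences for the class $\mathcal{S}^*_\xi$, so my strategy is to first verify that $\xi$ is a Ma--Minda function and then invoke the general framework, with $\tilde f$ from \eqref{xi} playing the role of the extremal throughout.

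First I would check the Ma--Minda axioms for $\xi$. From \eqref{phi_series}, analyticity on $\mathbb{D}$, $\xi(0)=1$, and $\xi'(0)=1>0$ are immediate. All Taylor coefficients in \eqref{phi_series} are real, so $\xi(\bar z)=\overline{\xi(z)}$ and $\xi(\mathbb{D})$ is symmetric about the real axis. The two substantive requirements are $\RE\xi(z)>0$ on $\mathbb{D}$ and the starlikeness of $\xi(\mathbb{D})$ with respect to $1$, i.e.\ $\RE(z\xi'(z)/(\xi(z)-1))>0$; these reduce to positivity statements about $\sin z/(1-z)$ and $(z\cos z)/\sin z + z/(1-z)$ respectively, which I would establish by a boundary analysis on $|z|=r$ combined with the minimum principle for harmonic functions, or by a direct power-series estimate. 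This verification is the main obstacle of the proof; the remaining items are then mechanical applications of the general theory, with the pictures in \Cref{fig1,fig2} providing visual confirmation of both conditions.

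With $\xi$ certified as a Ma--Minda function, the subordination statement (1) is the classical Ma--Minda subordination lemma: from $zf'(z)/f(z)\prec\xi(z)$ one integrates the logarithmic derivative along radii and applies Lindel\"of's principle to obtain $f(z)/z\prec \tilde f(z)/z$, and then $zf'/f \prec z\tilde f'/\tilde f$ follows by differentiation. Items (2), (4), and (5) are the standard growth, rotation, and covering corollaries of (1): monotonicity of $|\tilde f|$ on diameters of $\mathbb{D}$ (which uses the real-coefficient property of $\tilde f$) gives the sharp two-sided bound in (2); the rotation inequality in (4) is the corresponding statement for $\arg(f/z)$ deduced from the subordination $f/z \prec \tilde f/z$; and the covering disk in (5) together with the rotation-of-$\tilde f$ extremal case arises as the $r\to 1^-$ limit of the lower growth bound.

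For the distortion theorem (3) I would factor
\[
f'(z)=\frac{f(z)}{z}\cdot\frac{zf'(z)}{f(z)},
\]
use (2) to bound the first factor by $\tilde f(r)/r$ from above and by $-\tilde f(-r)/r$ from below, and bound the second factor via the subordination $zf'/f\prec\xi = 1 + \sin z/(1-z)$, which yields $|zf'(z)/f(z)-1|\le M(r)$ with $M(r):=\max_{|z|=r}|\sin z/(1-z)|$. The stated inequalities $-|1-M(r)|\,\tilde f(-r)/r \le |f'(z)| \le |1+M(r)|\,\tilde f(r)/r$ then follow from a triangle-inequality combination of these two estimates.
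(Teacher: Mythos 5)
Your proposal follows the same route as the paper: the paper gives no separate proof of this theorem, merely observing (with graphical evidence for the figures of $\xi(\mathbb{D})$) that $\xi$ is a Ma--Minda function, so that $\mathcal{S}^*_{\xi}$ inherits the standard subordination, growth, distortion, rotation and covering results with $\tilde f$ from \eqref{xi} as the extremal function. Your write-up is essentially this argument in more detail --- including the factorization $f'=(f/z)\cdot(zf'/f)$ that explains the paper's nonstandard $M(r)$-form of the distortion bound --- and, like the paper, it leaves the substantive verification that $\RE\xi>0$ and that $\xi(\mathbb{D})$ is starlike with respect to $1$ at the level of a sketch rather than a completed estimate.
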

In an analogous manner, the above results can be extended to the class $\mathcal{S}^*_{\xi_{q}}$.

A function \(f\) belongs to \(\mathcal{S}^*_{\xi_q}\) if and only if there exists a Schwarz function \(w(z) \in \mathcal{B}_0\) such that
\[
\frac{z d_qf(z)}{f(z)} = \xi_q(w(z)).
\]
This representation yields the integral form 
\[
f(z) = z \exp\left( \int_0^z \frac{\xi_q(w(t)) - \lambda_q}{t}  d_q t \right),
\]
where \(\lambda_q = \frac{\ln q}{q-1}\) and \(\lim_{q \to 1^-} \lambda_q = 1\).

Using the Jackson integral definition
\[
\int_0^z h(t)  d_q t = (1-q)z \sum_{k=0}^\infty q^k h(q^k z),
\]
we obtain the explicit series representation
\[
\int_0^z \frac{\xi_q(w(t)) - \lambda_q}{t}  d_q t = (1-q) \sum_{k=0}^\infty \left( \xi_q(w(q^k z)) - \lambda_q \right),
\]
provided the series converges for the given \(\xi_q\) and \(q\).

The extremal function for the class \(\mathcal{S}^*_{\xi_q}\), corresponding to \(w(z) = z\), is given by
\begin{align}
\tilde{f}_q(z) &= z \exp\left( \int_0^z \frac{\xi_q(t) - \lambda_q}{t}  d_q t \right) \nonumber \\
&= z \exp\left( \int_0^z \frac{\sin(qt) + q(1-qt)\big(1+ \frac{\ln q}{1-q}\big)}{qt(1-qt)}  d_q t \right) \in \mathcal{S}^*_{\xi_q}. \label{qsharp}
\end{align}
Its classical counterpart for \(q \to 1^-\) is
\begin{align}
\tilde{f}(z) = z \exp\left( \int_0^z \frac{\xi(t) - 1}{t}  dt \right) 
= z \exp\left( \int_0^z \frac{\sin t}{t(1-t)}  dt \right) \in \mathcal{S}^*_{\xi}. \label{xi}
\end{align}

The extremal function $\tilde{f}_q$, defined explicitly in equation \eqref{qsharp}, admits an alternative characterization through a convolution equation. Specifically, it is the unique analytic function (normalized by $\tilde{f}_q(0) = 0$ and $\tilde{f}_q'(0) = 1$) satisfying the functional relation:

\begin{align*}
    \tilde{f}_q(z) * \frac{z}{(1 - qz)(1 - z)} = \tilde{f}_q(z) \cdot \xi_q(z).
\end{align*}

A Hankel matrix is a square matrix that is symmetric about its principal diagonal, capture nonlinear 
structural behaviour supporting stable simulations. For functions \(f \in \mathcal{S}\) of the form \eqref{1.1}, Pommerenke \cite{16} defined the \(s\)th Hankel determinant as
\begin{align}
H_{s,n}(f) =
\begin{vmatrix}
a_n & a_{n+1} & \cdots & a_{n+s-1} \\
a_{n+1} & a_{n+2} & \cdots & a_{n+s} \\
\vdots & \vdots & \ddots & \vdots \\
a_{n+s-1} & a_{n+s} & \cdots & a_{n+2s-2}
\end{vmatrix}, \label{Hankel_det}
\end{align}
where \(n,s \in \mathbb{N}\) and \(a_1 = 1\). Establishing sharp upper bounds for Hankel determinants remains a central problem in geometric function theory.

Ye and Lim \cite{20} demonstrated that any \(n \times n\) matrix over \(\mathbb{C}\) can be expressed as a product of Toeplitz or Hankel matrices. Toeplitz matrices are characterized by constant entries along each diagonal and find extensive applications in quantum physics, image processing, integral equations, and signal processing. The Toeplitz determinant for \(f \in \mathcal{S}\) is defined as
\begin{align}
T_{s,n}(f) = 
\begin{vmatrix}
a_n & a_{n+1} & \dots & a_{n+s-1} \\
a_{n+1} & a_n & \dots & a_{n+s-2} \\
\vdots & \vdots & \ddots & \vdots \\
a_{n+s-1} & a_{n+s-2} & \dots & a_n
\end{vmatrix}. \label{toeplitz_det}
\end{align}

In this study of $q$-geometric function theory, coefficient bounds extend beyond classical generalizations by capturing how geometric properties of analytic functions deform under the parameter $q$. Sharp coefficient inequalities encode precise information about the image domain and, in applied contexts, about the physical or computational systems modeled by conformal maps. For example, in medical imaging, structures with spiral or helical features can be effectively modeled using $q$-starlike mappings, where incorporating $q$-coefficient constraints improves reconstruction stability in limited-data settings; refer to \cite{appn1, appn2}. From a theoretical perspective, replacing integers $n$ with $q$-integers $[n]_q$ and classical derivatives with Jackson’s $q$-difference operator fundamentally alters the classes of analytic functions and their extremal behavior. Although several studies address coefficient bounds in the $q$-calculus, sharp estimates remain relatively scarce. Motivated by this, we establish sharp bounds for initial coefficients, Hankel and Toeplitz determinants, and functionals such as Fekete--Szeg\"o for the newly introduced classes $\mathcal{S}^*_{\xi}$ and $\mathcal{S}^*_{\xi_q}$. In addition, the simultaneous study of the $q$-class and its classical counterpart highlights their structural differences while revealing the convergence of the $q$-results with the classical case.

%------------%%%%%--Preliminary results--%%%%%%%%---
\section{Preliminary results}\label{3}

\begin{lemma}\label{lemma1}\cite{29}
If $w(z)  \in \mathcal{B}_0$ be of the form \eqref{schwarz}, if $b_1 >0$. Then
\begin{eqnarray}
|b_1|  &\leq & 1,\nonumber\\
|b_2| & \leq & 1-|b_1|^2,\nonumber \\
|b_3| &\leq &  1-|b_1|^2-\frac{|b_2|^2}{1+|b_1|}. \nonumber
\end{eqnarray}
\end{lemma}

\begin{lemma}\cite{17} \label{lemma_2}
   If $w(z)  \in \mathcal{B}_0$ be of the form \eqref{schwarz}, if $b_1 >0$. Then
\begin{align*}
    b_2 = \alpha(1-b_1^2), \quad
    b_3 = (1-b_1^2)\left[(1-|\alpha|^2)\beta - b_1\alpha^2\right],
\end{align*}
where $\alpha, \beta \in \mathbb{C}$ with $|\alpha|, |\beta| \leq 1$.
\end{lemma}

\begin{lemma}\cite{7} \label{lemma_3}
Let $p(z)$ be of the form \eqref{2.1}, and let $\mu \in \mathbb{C}$. Then
\begin{align*}
    |c_2 - \mu c_1^2| \leq 2\max\{1,\, |2\mu-1|\}. 
\end{align*}
\end{lemma}

\begin{lemma} \cite{14} \label{lemma_4}
    If $w(z) \in \mathcal{B}_0$ be of the form \eqref{schwarz} and $\sigma, \nu \in \mathbb{R}$. Then the following sharp estimate exists.
\begin{align*}
     \left| b_3 + \sigma b_1 b_2 + \nu b_1^3 \right| \leq 
        |\nu| &\quad (\sigma,\nu) \in D_1,\qquad 
\end{align*}
where
\begin{align*}
D_1 &= \begin{cases}
\displaystyle (\sigma,\nu): |\sigma| \geq \frac{1}{2}, &  \displaystyle \nu \leq -\frac{2}{3}(|\sigma|+1), \\[10pt]
\displaystyle (\sigma,\nu): 2\leq |\sigma| \leq  4,&  \displaystyle \nu \geq \frac{1}{12}(\sigma^2+8).
\end{cases}   
\end{align*}

\end{lemma}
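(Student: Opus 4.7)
The plan is to exploit the Prokhorov--Szynal type parametrization of Lemma~\ref{lemma_2} to convert the estimate into a constrained real optimization, which can then be attacked term by term. Writing $x = b_1 \in [0,1]$ and substituting
\[
b_2 = \alpha(1-x^2),\qquad b_3 = (1-x^2)\bigl[(1-|\alpha|^2)\beta - x\alpha^2\bigr]
\]
with $\alpha,\beta \in \overline{\mathbb{D}}$, one obtains
\[
b_3 + \sigma b_1 b_2 + \nu b_1^3 = (1-x^2)(1-|\alpha|^2)\beta + (1-x^2)\,x\,(\sigma\alpha-\alpha^2) + \nu\,x^3.
\]
Applying the triangle inequality with $|\beta|\le 1$, together with the elementary identity $\max_{|\alpha|=y}|\sigma\alpha-\alpha^2| = y(y+|\sigma|)$ (valid since $\sigma\in\mathbb{R}$, the extremum being attained when $\alpha$ has phase opposite to $\sigma$), reduces matters to verifying
\[
\Psi(x,y) := (1-x^2)(1-y^2) + (1-x^2)\,x\,y\,(y+|\sigma|) + |\nu|\,x^3 \;\le\; |\nu|\qquad\text{for all } (x,y)\in[0,1]^2.
\]

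Next I would factor $|\nu| - \Psi(x,y) = (1-x)\,Q(x,y)$, where a short rearrangement gives
\[
Q(x,y) = (1-x^2)\,y^2 - x(1+x)|\sigma|\,y + \bigl[|\nu|(1+x+x^2) - (1+x)\bigr].
\]
For each fixed $x\in[0,1)$, $Q(x,\cdot)$ is a convex quadratic in $y$ with unconstrained minimizer $y^{\ast}(x) = \tfrac{x|\sigma|}{2(1-x)}$. The argument then splits into the two sub-regimes: when $y^{\ast}(x)\le 1$, the minimum of $Q(x,\cdot)$ on $[0,1]$ is attained at $y^{\ast}(x)$ and equals $|\nu|(1+x+x^2) - (1+x) - \tfrac{x^2(1+x)\sigma^{2}}{4(1-x)}$, whereas otherwise it is attained at $y=1$. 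In each case I would show that the two defining strata of $D_1$ are precisely the $(\sigma,\nu)$-regions where this minimum is non-negative uniformly in $x\in[0,1]$; the thresholds $\tfrac{2}{3}(|\sigma|+1)$ and $\tfrac{1}{12}(\sigma^{2}+8)$ will emerge by setting the minimum equal to zero at an internal critical $x$ and solving for $|\nu|$.

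Sharpness is witnessed by the Schwarz function $w(z)=z$, for which $b_1=1$ and $b_2=b_3=0$, giving $|b_3+\sigma b_1 b_2+\nu b_1^3|=|\nu|$. The main obstacle is the second stratum, which requires both $|\sigma|\ge 2$ and $|\sigma|\le 4$: these bounds reflect the non-trivial interplay between the interior critical value $y^{\ast}(x)$ and the constraint $y\in[0,1]$, causing the location of the extremum of $Q(x,\cdot)$ to switch as $|\sigma|$ varies. Routing this case analysis cleanly, and keeping the resulting one-variable inequalities in $x$ sharp, is where the bulk of the technical work lies; the parametrization step itself is routine.
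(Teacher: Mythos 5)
Your setup is fine as far as it goes: the parametrization via \Cref{lemma_2}, the identity $b_3+\sigma b_1b_2+\nu b_1^3=(1-x^2)(1-|\alpha|^2)\beta+x(1-x^2)(\sigma\alpha-\alpha^2)+\nu x^3$, and the factorization $|\nu|-\Psi(x,y)=(1-x)Q(x,y)$ with your $Q$ are all correct, and your sharpness example $w(z)=z$ is right. Keep in mind the paper offers no proof of this lemma at all (it is quoted from Prokhorov--Szynal \cite{14}), so your argument stands alone; on the first stratum it is viable, since there $\nu<0$ and the choices $\beta=-1$, $\alpha=-y\,\mathrm{sgn}(\sigma)$ align all three terms, so the termwise triangle inequality is attainable and the condition forced by $Q(x,1)\ge0$, namely $|\nu|\ge\sup_{0\le x\le1}\frac{x(1+x)(1+|\sigma|)}{1+x+x^2}=\frac23(1+|\sigma|)$, is exactly the stratum-1 hypothesis.

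The genuine gap is that your reduction ``$\Psi(x,y)\le|\nu|$ on $[0,1]^2$'' is \emph{false} on the second stratum, so the plan cannot be completed there. For $x$ near $1$ one has $y^*(x)=\frac{x|\sigma|}{2(1-x)}>1$, so the binding constraint is $Q(x,1)=|\nu|(1+x+x^2)-x(1+x)(1+|\sigma|)\ge0$, which as $x\to1^-$ forces $|\nu|\ge\frac23(1+|\sigma|)$; but for $2\le|\sigma|\le4$ we have $\frac1{12}(\sigma^2+8)-\frac23(|\sigma|+1)=\frac{|\sigma|(|\sigma|-8)}{12}<0$, so stratum 2 contains points violating this. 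Concretely, for $(\sigma,\nu)=(3,\tfrac{17}{12})\in D_1$ and $(x,y)=(0.9,1)$ your bound gives $\Psi=0.9\cdot0.19\cdot4+\tfrac{17}{12}\cdot 0.729\approx1.72>\tfrac{17}{12}=|\nu|$. The structural reason is that $\Psi$ depends only on $(|\sigma|,|\nu|)$ and is therefore blind to the sign of $\nu$, whereas the true sharp regions are not symmetric under $\nu\mapsto-\nu$: when $\nu>0$, the phase of $\alpha$ that maximizes $|\sigma\alpha-\alpha^2|$ makes that term real negative, anti-aligned with $\nu x^3>0$, so estimating each term's modulus separately is strictly lossy exactly where stratum 2 lives. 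To repair this you must keep the $\alpha$-dependence together, e.g. bound $|b_3+\sigma b_1b_2+\nu b_1^3|\le(1-x^2)\bigl(|A+B\alpha+C\alpha^2|+1-|\alpha|^2\bigr)$ with $A=\nu x^3/(1-x^2)$, $B=\sigma x$, $C=-x$, and maximize over $\alpha\in\overline{\mathbb{D}}$ using a Choi--Kim--Sugawa-type result (the full version of \Cref{lemma5}; the paper states only the case $AC\ge0$, which here means $\nu\le0$, so stratum 2 needs its remaining cases), or follow the original variational argument of \cite{14}. It is that finer analysis which produces the threshold $\frac1{12}(\sigma^2+8)$ and the restriction $2\le|\sigma|\le4$; they cannot emerge from your $Q$.
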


\begin{lemma}\label{lemma5}\cite{13}:
If $A$, $B$, $C \; \in \mathbb{R}$, let us consider
$$Y(A, B, C):= \max \{ |A + Bz + Cz^{2} | + 1 - |z|^2, \quad z \in \mathbb{\overline{D}} \}.$$

If $AC \ge 0$, then
    \[Y(A, B, C)= \displaystyle\left\{\begin{array}{ll}
        |A| + |B| + |C|,& |B| \geq 2(1  - |C|), \\
        \\
        1 + |A| +\displaystyle \frac{B^2}{4(1 - |C|)}, & |B| < 2(1 - |C|).
    \end{array} \right.\]
    
\end{lemma}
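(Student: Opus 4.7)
The plan is to reduce the bivariate maximization of $|A + Bz + Cz^2| + 1 - |z|^2$ over $\overline{\mathbb{D}}$ to a one-variable quadratic optimization in $r := |z| \in [0,1]$, leveraging the sign hypothesis $AC \ge 0$. First I would normalize: since $|-A - Bz - Cz^2| = |A + Bz + Cz^2|$, and since $AC \ge 0$ says that $A$ and $C$ have a common sign (or one of them vanishes), I may assume without loss of generality that $A, C \ge 0$, in which case $|A| = A$ and $|C| = C$.

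Next, for any $z$ with $|z| = r$ the triangle inequality gives $|A + Bz + Cz^2| \le A + |B|\,r + C\,r^2$, and this bound is sharp: at $z_0 = \mathrm{sgn}(B)\,r$ (or $z_0 = r$ if $B = 0$) the three summands $A$, $B z_0$, $C z_0^2$ all lie on the non-negative real axis, so equality holds. Since $1 - |z|^2 = 1 - r^2$ depends only on $r$, this collapses the problem to
\[
Y(A,B,C) = \max_{0 \le r \le 1} \psi(r), \qquad \psi(r) := (A + 1) + |B|\,r + (C - 1)\,r^2 .
\]

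The remaining step is an elementary one-variable analysis of $\psi$ on $[0,1]$. If $C \ge 1$, then $\psi$ is convex and non-decreasing on $[0,1]$ (its derivative $|B| + 2(C-1)r$ is non-negative), so $\max \psi = \psi(1) = A + |B| + C$; since $2(1 - |C|) \le 0 \le |B|$, this falls under the first branch. If $C < 1$, then $\psi$ is concave with unconstrained vertex at $r^{*} = |B|/\bigl(2(1 - C)\bigr) \ge 0$. When $r^{*} \ge 1$, equivalently $|B| \ge 2(1 - |C|)$, the constrained maximum is again $\psi(1) = |A| + |B| + |C|$. When $r^{*} < 1$, equivalently $|B| < 2(1 - |C|)$, the maximum is attained at the vertex: a direct substitution gives $\psi(r^{*}) = |A| + 1 + B^2/\bigl(4(1 - |C|)\bigr)$, which is the second branch.

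There is no serious obstacle here; the only delicate point is sign bookkeeping, ensuring that the absolute-value symbols appearing in the statement arise naturally from the WLOG reduction and from choosing a real extremal $z$ (namely $z = \pm 1$ in the first branch, and $z = \mathrm{sgn}(B)\,r^{*}$ in the second). Because these extremal $z$ are real and make $A + Bz + Cz^2$ non-negative, the triangle-inequality step is tight, so sharpness is established simultaneously with the bound.
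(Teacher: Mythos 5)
The paper does not prove this lemma --- it is quoted verbatim from reference [13] --- so there is no in-paper argument to compare against; judged on its own, your proof is correct and is essentially the standard proof of the $AC\ge 0$ case of that cited result. The reduction is sound: after the sign normalization $A,C\ge 0$ (which also covers the degenerate cases where one of $A,C$ vanishes, by flipping the sign of the whole polynomial if necessary), the triangle inequality bound $|A+Bz+Cz^{2}|\le A+|B|r+Cr^{2}$ on $|z|=r$ is attained at the real point $z=\operatorname{sgn}(B)\,r$, so the problem collapses exactly to maximizing $\psi(r)=(A+1)+|B|r+(C-1)r^{2}$ on $[0,1]$, and your case split --- $C\ge 1$ (monotone, max at $r=1$, first branch automatic since $2(1-|C|)\le 0$), $C<1$ with vertex $r^{*}=|B|/\bigl(2(1-C)\bigr)$ inside or outside $[0,1]$ --- reproduces both branches, with the vertex value computation $\psi(r^{*})=1+|A|+B^{2}/\bigl(4(1-|C|)\bigr)$ checking out. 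Since your extremal points are real and make all three terms nonnegative, sharpness comes for free, so the argument is complete.
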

%%%%%%%%%%%%%%%%%%%%-------MAIN RESULTS--------
%%%%%%%%%%%%%%%%%%%%%%%%%%%%%%%%%%%%%%%%%%%%%%%%%%%%%%%%%%%%%%%%%%%%%%%%%%%%%%%%%%%%%%%%%%%%%%%%%%%%%%
\section{\texorpdfstring{Bounds for the Classical Class $\mathcal{S}^*_{\xi}$}
                          {Bounds for the Classical Class S*xi}}

We proceed with the estimation of the sharp initial coefficient bounds.

\begin{theorem}\label{thmc1}
Let $f \in \mathcal{S}^*_{\xi}$, then 
\[
|a_2| \le 1, \quad |a_3|\le 1 \quad \text{and} \quad |a_4|\le \frac{17}{18}.
\]
These bounds are sharp.
\end{theorem}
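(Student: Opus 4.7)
The plan is to exploit the subordination defining $\mathcal{S}^*_{\xi}$, namely $zf'(z)/f(z) = \xi(w(z))$ for some $w \in \mathcal{B}_0$. I would expand both sides as power series through order $z^4$, using the explicit Taylor series \eqref{phi_series} of $\xi$ together with $w(z) = b_1 z + b_2 z^2 + b_3 z^3 + \cdots$, and equate like coefficients. Inverting the resulting relations, each $a_n$ becomes a polynomial in $b_1, b_2, b_3$; the key outputs I expect are
\begin{align*}
a_2 = b_1, \qquad a_3 = b_1^2 + \tfrac{1}{2}\,b_2, \qquad 3a_4 = b_3 + \tfrac{7}{2}\,b_1 b_2 + \tfrac{17}{6}\,b_1^3.
\end{align*}
The bound $|a_2|\le 1$ is then immediate from Lemma~\ref{lemma1}. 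For $|a_3|$, the triangle inequality combined with $|b_2|\le 1-|b_1|^2$ from Lemma~\ref{lemma1} yields
\[
|a_3|\le |b_1|^2 + \tfrac12(1-|b_1|^2) = \tfrac12\bigl(1+|b_1|^2\bigr) \le 1.
\]

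The fourth-coefficient estimate is the main step, and the main obstacle. A naive triangle inequality on the formula for $a_4$ would give the much worse bound $\tfrac{17}{18}+\tfrac{7}{6}+\tfrac{1}{3}$, so a sharper tool is required. Here Lemma~\ref{lemma_4} is tailor-made: $3a_4$ has precisely the shape $b_3 + \sigma b_1 b_2 + \nu b_1^3$ with $(\sigma,\nu)=(7/2,\,17/6)$. I would then verify that $(\sigma,\nu)$ lies in the second case of the region $D_1$: indeed $2\le |\sigma|=7/2\le 4$, and
\[
\nu = \tfrac{17}{6} \;\ge\; \tfrac{1}{12}(\sigma^2+8) = \tfrac{1}{12}\bigl(\tfrac{49}{4}+8\bigr) = \tfrac{27}{16}.
\]
Thus Lemma~\ref{lemma_4} delivers $|3a_4|\le \nu = 17/6$, and hence $|a_4|\le 17/18$.

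Sharpness at all three inequalities is witnessed \emph{simultaneously} by the extremal function $\tilde f$ of \eqref{xi}, corresponding to the choice $w(z)=z$, i.e.\ $b_1=1$ and $b_2=b_3=0$. Substituting these into the displayed coefficient formulas yields $a_2=1,\ a_3=1,\ a_4=17/18$, confirming that every bound is attained.
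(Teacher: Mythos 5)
Your proposal is correct and follows essentially the same route as the paper: the same coefficient identities $a_2=b_1$, $a_3=b_1^2+\tfrac12 b_2$, $a_4=\tfrac{1}{18}(17b_1^3+21b_1b_2+6b_3)$, the same application of Lemma~\ref{lemma_4} with $(\sigma,\nu)=(7/2,17/6)$ in the second case of $D_1$, and the same extremal function $\tilde f$ for sharpness. The only cosmetic difference is that for $|a_3|$ you invoke Lemma~\ref{lemma1} ($|b_2|\le 1-|b_1|^2$) directly rather than the parametrization of Lemma~\ref{lemma_2}, which amounts to the identical estimate.
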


\begin{proof}
Let $f \in \mathcal{S}^*_{\xi}$. Then by \eqref{star}, there exists a schwarz function $w(z) \in \mathcal{B}_0$ such that
\begin{align}
\frac{z\, f'(z)}{f(z)} = \xi_q\bigl(w(z)\bigr). \notag
\end{align}

Using \eqref{1.1}, we get
\begin{align}
\frac{z\, f'(z)}{f(z)} 
&= 1 + a_2 z + (-a_2^2 + 2 a_3) z^2 + (a_2^3 - 3 a_2 a_3 + 3 a_4) z^3 + \cdots. \label{4.2}
\end{align}

Similarly, using \eqref{phi_series}, we get
\begin{align}
\xi\bigl(w(z)\bigr) 
&= 1 + b_1 z + (b_1^2 + b_2) z^2 + \left(\frac{5}{6} b_1^3 + 2 b_1 b_2 + b_3\right) z^3 + \cdots. \label{4.3}
\end{align}

By comparing the coefficients of \eqref{4.2} and \eqref{4.3}, we obtain
\begin{align}
a_2 &= b_1, \label{a_2}\\
a_3 &= b_1^2 + \frac{b_2}{2}, \label{a_3}\\
a_4 &= \frac{1}{18}\bigl(17 b_1^3 + 21 b_1 b_2 + 6 b_3\bigr). \label{a_4}
\end{align}

Since $b_1 \in [0,1]$, it follows from \eqref{a_2} that $|a_2| \le 1$. Using ~\Cref{lemma_2} in \eqref{a_3}, we deduce  $|a_3| \le 1$.  
Furthermore, employing ~\Cref{lemma_4} with $\sigma = 7/2$ and $\nu = 17/6$, it follows from \eqref{a_4} that $|a_4| \le 17/18$. The sharpness of the bounds can be examined using $\tilde f$ defined in \eqref{xi}.
\end{proof}

\medskip
Next, we determine the Fekete-Szeg\"o bound for the class $\mathcal{S}^*_{\xi}$.

\begin{theorem}\label{thmc2}
Let $f \in \mathcal{S}^*_{\xi}$, then 
\[
|a_3 - \mu a_2^2| \leq \frac{1}{2}\max\left\{1,2\mu-2\right\}.
\]
\end{theorem}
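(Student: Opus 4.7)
Because $f\in\mathcal{S}^*_{\xi}$, there exists a Schwarz function $w\in\mathcal{B}_0$ with $\tfrac{zf'(z)}{f(z)}=\xi(w(z))$. The coefficient identifications \eqref{a_2}--\eqref{a_3} already derived inside the proof of Theorem~\ref{thmc1} give $a_2=b_1$ and $a_3=b_1^{2}+\tfrac{b_2}{2}$. Substituting these into the Fekete--Szeg\"o functional I would rewrite
\[
a_3-\mu a_2^{2} \;=\; (1-\mu)b_1^{2}+\tfrac{1}{2}b_2 \;=\; \tfrac{1}{2}\bigl(b_2-(2\mu-2)b_1^{2}\bigr).
\]
This reduces the problem to estimating $|b_2-\nu b_1^{2}|$ for the Schwarz-function parameter $\nu=2(\mu-1)$, a classical Keogh--Merkes-type quantity.

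For the estimate I would invoke Lemma~\ref{lemma_2}, writing $b_2=\alpha(1-b_1^{2})$ with $|\alpha|\le 1$, so that the triangle inequality yields
\[
|b_2-\nu b_1^{2}| \;\le\; |\alpha|(1-b_1^{2})+|\nu|\,b_1^{2},
\]
and maximizing over $b_1\in[0,1]$ and $|\alpha|\le 1$ gives $\max\{1,|\nu|\}$. An equivalent route is to observe that $p(z):=\xi(w(z))\in\mathcal{P}$ (since $\xi$ is a Ma-Minda function), express $a_3-\mu a_2^{2}$ in terms of the Carath\'eodory coefficients $c_1=b_1$, $c_2=b_1^{2}+b_2$, and invoke Lemma~\ref{lemma_3} on the resulting $|c_2-\lambda c_1^{2}|$ expression; both routes deliver the same piecewise bound. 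Combining the estimate with the outer factor $\tfrac{1}{2}$ gives the claimed inequality, with the switch between the two branches occurring at the value of $\mu$ where $|2\mu-2|$ crosses $1$.

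\textbf{Main obstacle.} There is no substantive obstacle: once the formulas for $a_2,a_3$ from Theorem~\ref{thmc1} are in hand, the argument is a direct application of a Schwarz-function (or Carath\'eodory) coefficient inequality. The only point requiring care is bookkeeping around the threshold of the $\max$, i.e.\ correctly identifying the range of $\mu$ on which the constant branch dominates versus the linear branch, so that the piecewise right-hand side matches the stated form.
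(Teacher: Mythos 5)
Your Schwarz-function computation is internally sound: with $a_2=b_1$ and $a_3=b_1^2+\tfrac{b_2}{2}$ from \eqref{a_2}--\eqref{a_3} you correctly obtain $a_3-\mu a_2^2=\tfrac12\bigl(b_2-2(\mu-1)b_1^2\bigr)$, and the estimate $|b_2-\nu b_1^2|\le\max\{1,|\nu|\}$ via \Cref{lemma_2} is right. But this delivers
\[
|a_3-\mu a_2^2|\le\tfrac12\max\bigl\{1,\,2|\mu-1|\bigr\},
\]
which is \emph{not} the bound asserted in \Cref{thmc2}, namely $\tfrac12\max\{1,\tfrac{2\mu-3}{2}\}$: at $\mu=2$ your bound is $1$ while the stated one is $\tfrac12$, and at $\mu=0$ yours is $1$ versus $\tfrac34$. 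So the final step ``combining the estimate with the outer factor $\tfrac12$ gives the claimed inequality'' is exactly where the proposal fails as a proof of the statement as written, since $2|\mu-1|$ and $|2\mu-3|/2$ do not coincide. In fact your bound, not the stated one, is the consistent one: at $\mu=0$ the functional is $|a_3|$, which \Cref{thmc1} bounds sharply by $1$ (attained by $\tilde f$ of \eqref{xi}, i.e.\ $b_1=1$, $b_2=0$), and the same function gives $|a_3-2a_2^2|=1>\tfrac12$, so the theorem's displayed bound cannot hold for all $\mu$.

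The paper's own proof takes the Carath\'eodory route: it sets $w=(p-1)/(p+1)$, uses $2b_1=c_1$, $4b_2=2c_2-c_1^2$ from \eqref{2.4.17} to write $a_3-\mu a_2^2=\tfrac14\bigl(c_2-\tfrac{2\mu-1}{2}c_1^2\bigr)$, and then applies \Cref{lemma_3} in the quoted form $|c_2-\mu c_1^2|\le\max\{2,2|\mu-1|\}$; that form is what produces $\tfrac12\max\{1,\tfrac{2\mu-3}{2}\}$. With the standard sharp inequality $|c_2-\mu c_1^2|\le 2\max\{1,|2\mu-1|\}$ the very same substitution returns your $\tfrac12\max\{1,2|\mu-1|\}$, so the discrepancy is traceable to the form of \Cref{lemma_3}, not to your reduction. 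Note also that your claimed ``equivalent route'' via $p:=\xi\circ w$ with $c_1=b_1$, $c_2=b_1^2+b_2$ is not equivalent: it only retains $\Re p>0$ and discards the subordination to $\xi$, so applying a $|c_2-\lambda c_1^2|$ bound there yields an estimate without the factor $\tfrac12$ (roughly twice as large); the two routes agree only if one passes to $p$ through the lossless correspondence $w=(p-1)/(p+1)$, as the paper does. So your argument should be presented as proving the corrected bound $\tfrac12\max\{1,2|\mu-1|\}$, with the equivalence claim removed.
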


\begin{proof}
Let $f \in \mathcal{S}^*_{\xi}$, then by using \eqref{a_2} and \eqref{a_3}, we obtain
\begin{align}
|a_3 - \mu a_2^2| &= \left| b_1^2 + \frac{b_2}{2} - \mu b_1^2 \right|. \label{4.6}    
\end{align}

Let $p(z) \in \mathcal{P}$. Then there exists a Schwarz function $w(z) \in \mathcal{B}_0$ such that
\begin{align}
    p(z) = \frac{1 + w(z)}{1 - w(z)} 
    \quad \implies \quad 
    w(z) = \frac{p(z) - 1}{p(z) + 1}.
    \label{2.4.16}
\end{align}

Comparing the coefficients in \eqref{2.4.16}, we obtain
\begin{align}
    2b_1 = c_1, 
    \qquad 
    4b_2 = 2c_2 - c_1^2.
    \label{2.4.17}
\end{align} 
Now, substituting \eqref{2.4.17} into \eqref{2.4.16}, and using \Cref{lemma_3}, we follow
\[
|a_3 - \mu a_2^2| 
= \left|\frac{1}{4}\left[c_2 -\Big( \frac{2\mu-1}{2}\Big) c_1^2 \right]\right| 
\le \frac{1}{2} \max\left\{1, 2\mu-2\right\}.
\]
Hence, the desired bound is established.
\end{proof}

Setting $\mu = 1$ in \Cref{thmc2}, we obtain the following sharp result:
\begin{corollary}\label{corc1}
Let $f \in \mathcal{S}^*_{\xi}$, then 
\[
|a_3 - a_2^2| \le \frac{1}{2}.
\]
\end{corollary}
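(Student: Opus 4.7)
The plan is to deduce the corollary directly from \Cref{thmc2} by substituting $\mu = 1$, and then to verify sharpness by constructing an explicit extremal function. Substituting $\mu = 1$ into the bound $\tfrac{1}{2}\max\{1, (2\mu-3)/2\}$ gives $\tfrac{1}{2}\max\{1, -1/2\} = \tfrac{1}{2}$, which yields the stated inequality immediately. The nontrivial content is therefore the sharpness claim.

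For sharpness, I would return to the identity derived in the proof of \Cref{thmc2}, namely
\begin{align*}
|a_3 - a_2^2| = \left|\tfrac{1}{4}\left[c_2 - \tfrac{1}{2} c_1^2\right]\right|,
\end{align*}
and search for a Schwarz function $w \in \mathcal{B}_0$ for which the associated Carath\'eodory function $p(z)=(1+w(z))/(1-w(z))$ produces equality in \Cref{lemma_3} at $\mu=1/2$, which requires $c_1=0$ and $|c_2|=2$. The natural candidate is $w(z) = z^2$, which gives $b_1 = 0$ and $b_2 = 1$; then \eqref{a_2} and \eqref{a_3} yield $a_2 = 0$ and $a_3 = \tfrac{1}{2}$, so $|a_3 - a_2^2| = \tfrac{1}{2}$.

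To package this as an honest extremal function in $\mathcal{S}^*_{\xi}$, I would exhibit
\begin{align*}
f_0(z) = z \exp\left(\int_0^z \frac{\xi(t^2) - 1}{t}\, dt\right),
\end{align*}
corresponding to the choice $w(z)=z^2$ in the subordination $zf'(z)/f(z) = \xi(w(z))$, and confirm by a short series expansion that its second and third Taylor coefficients are $0$ and $1/2$ respectively, matching the bound.

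The argument has no genuine obstacle: the inequality is a one-line specialization, and the main (minor) task is simply picking the right Schwarz function to realize equality. The only point requiring a small check is that $w(z)=z^2$ indeed saturates \Cref{lemma_3} at $\mu=1/2$, which is straightforward from $c_1=0$, $c_2=2$.
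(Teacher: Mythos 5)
Your proposal is correct and follows the paper's own route: specialize \Cref{thmc2} at $\mu=1$ to get the bound $\tfrac{1}{2}$, then realize equality via the Schwarz function $w(z)=z^2$. Your extremal function $z\exp\bigl(\int_0^z (\xi(t^2)-1)t^{-1}\,dt\bigr)$ is exactly the paper's $\tilde f_1$ in \eqref{sharp2}, since $\xi(t^2)-1=\sin(t^2)/(1-t^2)$, so the sharpness verification coincides with (and is slightly more explicit than) the paper's.
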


The equality in the above bound is attained for the function $\tilde{f}_1: \mathbb{D} \to \mathbb{C}$, defined by
\begin{align}
\tilde f_1(z)= z\, \exp\left(\int_0^z \frac{\sin(t^2)}{t(1-t^2)}dt\right). \label{sharp2}
\end{align}

Furthermore, if $f \in \mathcal{S}^*_{\xi}$, the second Hankel determinant satisfies
\[
|H_{2,1}(f)| = |a_1 a_3 - a_2^2| \leq \frac{1}{2}, \quad \text{where } a_1 = 1.
\]

\begin{theorem}\label{thmc3}
Let $f \in \mathcal{S}^*_{\xi}$. Then 
\begin{equation}
|H_{2,2}(f)| \leq \frac{1}{4}. \label{4.5.0}
\end{equation}
The estimate is sharp.
\end{theorem}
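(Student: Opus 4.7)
\textbf{Proof plan for \Cref{thmc3}.}  The plan is to express $a_2 a_4 - a_3^2$ in terms of the Schwarz coefficients $b_1, b_2, b_3$ via the identities \eqref{a_2}--\eqref{a_4}, then reduce to an optimization over a small number of real parameters using \Cref{lemma_2} and \Cref{lemma5}.  First I would substitute \eqref{a_2}, \eqref{a_3}, \eqref{a_4} into $H_{2,2}(f) = a_2 a_4 - a_3^2$ and simplify to obtain an expression of the form
\[
a_2 a_4 - a_3^2 \;=\; -\frac{b_1^4}{18} + \frac{b_1^2 b_2}{6} + \frac{b_1 b_3}{3} - \frac{b_2^2}{4}.
\]
Next, by the rotational invariance of the class, I may assume $b_1 = t \in [0,1]$, and invoke \Cref{lemma_2} to write $b_2 = \alpha(1-t^2)$ and $b_3 = (1-t^2)\bigl[(1-|\alpha|^2)\beta - t\alpha^2\bigr]$ with $|\alpha|,|\beta|\le 1$.

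After this substitution, every $\beta$-dependence lies in a single additive term whose factor is $\frac{t(1-t^2)}{3}(1-|\alpha|^2)\beta$.  I would apply the triangle inequality together with $|\beta|\le 1$ and factor out $K := \frac{t(1-t^2)}{3}$ (treating $t=0$ and $t=1$ as easy boundary cases).  This puts the remaining expression precisely into the form $K\bigl(|A + B\alpha + C\alpha^2| + 1 - |\alpha|^2\bigr)$ required by \Cref{lemma5}, with
\[
A = -\frac{t^3}{6(1-t^2)},\qquad B = \frac{t}{2},\qquad C = -\frac{3+t^2}{4t}.
\]
Since $AC \ge 0$ and $|C| = (3+t^2)/(4t) \ge 1$ for all $t\in(0,1]$ (as $(t-1)(t-3)\ge 0$), the inequality $|B|\ge 2(1-|C|)$ holds automatically, so the first branch $Y = |A|+|B|+|C|$ of \Cref{lemma5} applies.

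Plugging these values back in and collecting terms yields the bound $|H_{2,2}(f)|\le (9-7t^4)/36$, which is a decreasing function of $t\in[0,1]$ and attains its maximum $1/4$ at $t=0$.  The boundary value $t=1$ forces $b_2=b_3=0$ and gives the smaller value $1/18$, so the overall supremum is indeed $1/4$.  Finally, sharpness follows by tracking the extremal case $t=0$, $|\alpha|=1$: taking $w(z)=z^2$ produces a function in $\mathcal{S}^*_{\xi}$ (namely $\tilde f_1$ from \eqref{sharp2}) achieving equality.  The main obstacle is the algebraic bookkeeping in factoring out $K$ cleanly and verifying that the hypotheses of \Cref{lemma5} hold uniformly on $(0,1)$; once that is set up, the bound and its sharpness fall out of a one-variable polynomial inequality.
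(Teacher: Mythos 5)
Your proposal is correct and follows essentially the same route as the paper's proof: express $a_2a_4-a_3^2$ via \eqref{a_2}--\eqref{a_4}, parametrize $b_2,b_3$ by \Cref{lemma_2}, bound the $\beta$-term with the triangle inequality, apply \Cref{lemma5}, and obtain sharpness from $w(z)=z^2$, i.e.\ the function $\tilde f_1$ of \eqref{sharp2}. In fact your execution is tidier than the paper's: your values $A=-t^3/\bigl(6(1-t^2)\bigr)$, $B=t/2$, $C=-(3+t^2)/(4t)$, the observation $|C|\ge 1$ so that the branch $Y=|A|+|B|+|C|$ applies for every $t\in(0,1)$, and the resulting bound $(9-7t^4)/36\le 1/4$ (consistent with the value $1/18$ at $t=1$) repair the typographical slips in the paper's $B$ and $C$ and avoid its unnecessary and garbled case split at $b_1\approx 0.8377$.
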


\begin{proof}
    Let $f \in \mathcal{S}^*_{\xi}$. Then from \eqref{a_2}-\eqref{a_4}, we obtain
    \begin{align*}
       |H_{2,2}(f)|=|a_2 a_4-a_3^2|=\left|\frac{1}{36}(-2 b_1^4 + 6 b_1^2 b_3 - 9 b_3^2 + 12 b_1 b_3)\right|,
    \end{align*}
which upon substitution for $b_2$ and $b_3$ by using \Cref{lemma_2}, yields
\begin{align}
    |H_{2,2}|&= \frac{1}{36}|(-2 b_1^4 + 6 b_1^2 (1 - b_1^2) \alpha - 9 (1 - b_1^2)^2 \alpha^2 \notag\\ &+ 
  12 b_1 (1 - b_1^2) (-b_1 \alpha^2 + \beta (1 - |\alpha|^2))|. \label{4.5.1}
\end{align}
For $b_1\in \{0,1\}$, \eqref{4.5.1} reduces to
\begin{align}
    |H_{2,2}|=
    \begin{cases}
        \displaystyle\frac{|\alpha|^2}{4}\le \frac{1}{4}, \quad b_1=0,|\alpha|\le1,\\
        \\
        \displaystyle\frac{1}{18}, \quad b_1=1. \label{4.5.2}
    \end{cases}
\end{align}
For $b_1 \in (0,1)$, applying the triangle inequality to \eqref{4.5.1} and using $|\beta|\le1$, we get
\begin{align}
    |H_{2,2}|\le\frac{12b_1(1-b_1^2)}{36}Y_1(A,B,C), \label{4.5.3}
\end{align}
where
\begin{align*}
    Y_1(A,B,C)=|A+B \alpha+C \alpha^2|+1-|\alpha|^2,
\end{align*}
with
\begin{align*}
    A=-\frac{2b_1^4}{12 b_1 (1 - b_1^2)}, \quad
    B=\frac{6 b_1^2 (1 - b_1^2)}{162 b_1 (1 - b_1^2)}, \quad
    C=-\frac{9 (1 - b_1^2)^2 + b_1}{12 b_1 (1 - b_1^2)}.
\end{align*}
Since $AC \ge 0$ for $b_1 \in (0,1)$, from \Cref{lemma5} we follow
\begin{align*}
    |B|-2(1-|C|)=\frac{9 - 11 b_1 - 15 b_1^2 + 12 b_1^3 + 6 b_1^4}{6 b_1 - 6 b_1^3}.
\end{align*}
It is observed that $|B|-2(1-|C|)$ is a decreasing function in $(0,0.837669)$ and increasing function on $(0.837669,1)$, thus applying \Cref{lemma1} to \eqref{4.5.3}, we have
\begin{align}
    |H_{2,2}|\le \begin{cases}
    \displaystyle  \frac{1}{36}b_1(1-b_1^2)\bigg(1+|A|+\frac{B^2}{4(1-|C|}\bigg)=0 , \quad (0,0.837669),
    \\
    \\
      \displaystyle  \frac{1}{36}b_1(1-b_1^2)(|A|+|B|+|C|)\le \frac{1}{12}  ,\quad (0.837669,1). \label{4.5.4}
    \end{cases}
\end{align}
Now the inequality \eqref{4.5.0} can be obtained using \eqref{4.5.2} and \eqref{4.5.4}. The sharpness of the result  can be examined using $\tilde f_1$ given by \eqref{sharp2}.
\end{proof}
We now proceed to the corresponding Toeplitz determinant bounds, beginning with the second-order Toeplitz determinant:

\begin{theorem}
    If $f\in \mathcal{S}^*_\xi$, then
    \begin{align*}
        |T_{2,1}| &\le 2,\qquad
        |T_{2,2}| \le 2 \qquad\text{and}\qquad
        |T_{2,3}| \le \frac{1}{4}.
    \end{align*}
These estimates are sharp.
\end{theorem}

\begin{proof}
    If $f \in \mathcal{S}^*_{\xi}$, then by optimizing \Cref{lemma1}, we follow the bounds for $ T_{2,1}$ and $ T_{2,2}$.  Now, from \eqref{toeplitz_det}, \eqref{a_3}, \eqref{a_4} and \Cref{lemma1}, we obtain
    \begin{align*}
        |T_{2,3}(f)|&=|a_3^2-a_4^2|\\
        &=\left|\left(b_1^2 + \frac{b_2}{2}\right)^2-\frac{1}{324}\left(17 b_1^3 + 21 b_1 b_2 + 6 b_3\right)^2\right|\\
        & \le \frac{1}{4}(1 + |b_1|^2)^2-\frac{(6 + 27 |b_1| + 15 |b_1|^2 - 10 |b_1|^3 - 4 |b_1|^4 - 6 |b_2|^2)^2}{324 (1 + |b_1|)^2}
    \end{align*}
Setting $x:=|b_1|$ and $y:=|b_2|$, we obtain
\[
|T_{2,3}(f)| \le \Gamma(x,y),
\]
where
\[\Gamma(x,y)= \frac{1}{4}(1 + x^2)^2-\frac{(6 + 27 x + 15 x^2 - 10 x^3 - 4 x^4 - 6 y^2)^2}{324 (1 + y)^2}.\]
In view of \Cref{lemma1}, we seek to determine the maximum of $\Gamma$ over the admissible region $\Delta$ defined as
\begin{align}
    \Delta = \{(x, y) : 0 \le x \le 1, \, 0 \le y \le 1 - x^2\}.\label{delta}    
\end{align}
We first examine the possibility of extrema occurring at interior points of $\Delta$. Accordingly, let $(x,y)\in\Delta$. Differentiating $\Gamma$ partially with respect to $y$, we obtain

\begin{align*}
    \frac{\partial\Gamma}{\partial y}=\frac{2 y (6 + 27 x + 15 x^2 - 10 x^3 - 4 x^4 - 6 y^2)}{27 (1 + x)^2},
\end{align*}
which yields
\[
y=0 \quad \text{or} \quad y=\frac{\sqrt{6 + 27x + 15x^2 - 10x^3 - 4x^4}}{\sqrt{6}}.
\]
For the corresponding values of $y$, solving $\partial \Gamma/\partial x=0$ gives
\[
x=1 \quad \text{or} \quad x\approx 0.622.
\]
It follows that these critical points do not lie in the region $\Delta$. Consequently, we proceed to examine the behavior of $\Gamma$ on the boundary of $\Delta$, where we have:
\begin{align}
    \Gamma(x,0)&\le \frac{35}{324}, \quad (0\le x\le1),\label{case1}\\ 
    \Gamma(0,y)&\le \frac{1}{4}, \quad (0\le y\le1),\label{case2}\\
    \Gamma(x,1-x^2)&\le \frac{35}{324}, \quad (0\le x\le1).\label{case3}
\end{align}
Henceforth, the bound for $T_{2,3}$ follows at once from the above inequalities \eqref{case1}-\eqref{case3}.The sharpness of $|T_{2,2}|$ and $|T_{2,3}|$ is attained by the function $\tilde f_2 \in \mathcal{S}^*_{\xi}$, defined by
\begin{align}
    \tilde f_2(z)=z \exp \int_{0}^z\frac{\sin(it)}{t(1-it)}dt  = z+iz^2-z^3-\frac{17 i z^4}{18}+\cdots ,\label{sharp iz}
\end{align}
For $|T_{2,3}|$, the extremal function $\tilde f_1$, given by \eqref{sharp2}, is considered.   
\end{proof}

 Next, we determine the third-order Toeplitz determinant:
\begin{theorem}\label{thmc5}
If $f \in \mathcal{S}^*_{\xi}$, then 
\begin{align}
    |T_{3,2}(f)| \leq 4\qquad \text{and} \qquad 
    |T_{3,2}(f)| \leq \frac{1}{324}.\label{4.8.0}
\end{align}
The estimate is sharp.
\end{theorem}

\begin{proof}
Suppoese $f \in \mathcal{S}^*_{\xi}$, then an optimization of \Cref{lemma1} leads to bound of $T_{3,1}$. Using \eqref{toeplitz_det}, \eqref{a_2}-\eqref{a_4}, and \Cref{lemma1} with $x:=|b_1|$ and $y:=|b_2|$, we obtain
    \begin{align*}
        |T_{3,2}(f)|&=|(a_2 - a_4) (a_2^2 - 2 a_3^2 + a_2 a_4)|        \le\displaystyle\frac{\kappa_{1}\kappa_{2}}{648 (1 + b_1)^2}=:\Gamma_2(x,y)
    \end{align*}
where
\begin{align*}
    \kappa_{1}&:=6y^2-6 - 9 b_1 + 10 b_1^3 + 4 x^4 \\
    \kappa_{2}&:= 3 b_1 (1 - 4 b_2^2)+9 - 63 b_1^2 - 39 b_1^3 + 47 b_1^4 + 35 b_1^5 + 9 b_1^6 + 9 b_1^7 
\end{align*}
Thus, \(|(T_{3,2}(f)| \le \Gamma_2(x,y).\) In view of \Cref{lemma1}, we need to determine the maximum of $\Gamma_2$ over $\Delta$, given by  \eqref{delta}. We first examine at all interior points of $\Delta$. Let $(x,y)\in\Delta$ and upon partially differentiating $\Gamma_2$  with respect to $x$ and $y$, we get $(0.773,0)$, $(0.635,0)$, and $(0.48,1.17)$. Since these critical points do not belong to $\Delta$, the extremal value of $\Gamma_2$ cannot occur in the interior. Consequently, on further investigation of $\Gamma_2$ on the boundary of $\Delta$, we follow the bound for $T_{3,2}$. Sharpness is attained by $\tilde f_1$, given by \eqref{sharp iz}, for $T_{3,1}$, and by $\tilde f_2$, given by \eqref{xi}, for $T_{3,2}$.
\end{proof}

%%%%%%%%%%%%%%%%%%%%%%%%%%%%%%%%%%%%%%%%%%%%%%%%%%%%%%%%%%%%%%%%%%%%%%%%%%%%%%%%%%%%%%%%%%%%%%%%---
\section{\texorpdfstring{Bounds for $q$-Starlike Class $\mathcal{S}^*_{\xi_q}$}
                          {The q-Starlike Class S*xi\_q}}

In this section, it is noteworthy that as $q \to 1^-$, the results for $f \in \mathcal{S}^*_{\xi_q}$ converge smoothly to those for $f \in \mathcal{S}^*_{\xi}$, thereby confirming the consistency of the limiting approach. A distinguishing feature is the establishment of sharpness for the $q$-case using a novel extremal function method. We begin with the following sharp initial coefficient bound estimate result: 

%%%%%%%%%%%%%%%%%%%%%-THEOREM-1 -BOUNDS--

\begin{theorem} \label{thm1}
If $f \in \mathcal{S}^*_{\xi_q}$, then
\begin{align}
    |a_2| \leq \frac{1}{q}, \quad 
    |a_3| \leq \frac{1 + q^2}{q^2(1 + q)} \quad \text{and} \quad
    |a_4| \leq \frac{6 + 12q^2 + 6q^3 + 5q^4 + 5q^5}{6q^3(1+q)(1 + q + q^2)}. \label{3.1}
\end{align}
These estimates are sharp.
\end{theorem}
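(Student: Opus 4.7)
The plan is to closely parallel the proof of the classical bound in \Cref{thmc1}. Starting from the subordination \eqref{qstar}, there exists $w \in \mathcal{B}_0$ with $z d_q f(z)/f(z) = \xi_q(w(z))$. I would expand the left-hand side as $1 + A_1 z + A_2 z^2 + A_3 z^3 + \cdots$ and solve the recursion that arises from matching coefficients in $f(z)(1 + \sum_{n \geq 1} A_n z^n) = z d_q f(z)$; this gives $A_1 = q a_2$, $A_2 = (q+q^2) a_3 - q a_2^2$, and $A_3 = (q+q^2+q^3) a_4 - (2q+q^2) a_2 a_3 + q a_2^3$. Expanding the right-hand side via \eqref{phi_q_series} and the composition $\xi_q \circ w$ yields
\[
\xi_q(w(z)) = 1 + b_1 z + (b_2 + q b_1^2) z^2 + \bigl(b_3 + 2q b_1 b_2 + \tfrac{5}{6} q^2 b_1^3\bigr) z^3 + \cdots.
\]
Equating the two expansions produces closed-form formulas for $a_2, a_3, a_4$ in terms of $b_1, b_2, b_3$; in particular $a_2 = b_1/q$ and $a_3 = \bigl[q b_2 + (q^2+1) b_1^2\bigr]/\bigl[q^2(1+q)\bigr]$.

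The first bound $|a_2| \leq 1/q$ is immediate from $|b_1| \leq 1$ (\Cref{lemma1}). For $|a_3|$, I would apply \Cref{lemma_2} to set $b_2 = \alpha(1-b_1^2)$ with $|\alpha| \leq 1$, so that after the triangle inequality $|a_3|$ is dominated by a function monotonically increasing in $b_1^2 \in [0,1]$ (the slope $(q^2-q+1)/[q^2(1+q)]$ is positive), whose maximum at $b_1 = 1$ is exactly $(q^2+1)/[q^2(1+q)]$.

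The main work is the bound on $|a_4|$. Substituting the expressions for $a_2$ and $a_3$ into the $A_3$-equation and collecting terms yields
\[
a_4 = \frac{1}{q(1+q+q^2)}\Bigl[\, b_3 + \sigma(q)\, b_1 b_2 + \nu(q)\, b_1^3\, \Bigr],
\]
where
\[
\sigma(q) = \frac{2+q+2q^2+2q^3}{q(1+q)},\qquad \nu(q) = \frac{6+12q^2+6q^3+5q^4+5q^5}{6\, q^2(1+q)}.
\]
The natural step is to apply \Cref{lemma_4}, which would immediately give $|a_4| \leq \nu(q)/[q(1+q+q^2)]$, matching the claimed bound after simplification. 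The hard part, and the principal obstacle, is verifying that $(\sigma(q), \nu(q))$ lies in the admissible region $D_1$ of \Cref{lemma_4}; the first branch is ruled out by positivity of $\nu(q)$, while the second requires both $2 \leq \sigma(q) \leq 4$ and $\nu(q) \geq (\sigma(q)^2+8)/12$. At $q = 1^-$ these reduce to the classical pair $(7/2,\,17/6)$ used in \Cref{thmc1}; however a short calculation shows $\sigma(q) > 4$ once $q$ is sufficiently small, so for the lower range of $q$ one would need a supplementary estimate, for instance by invoking the sharper bound on $|b_3|$ from \Cref{lemma1} together with a two-variable maximization over the triangle $\{(x,y): 0\le x\le 1,\; 0\le y\le 1-x^2\}$ in the spirit of the boundary analysis carried out in the proof of \Cref{thmc4}.

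Sharpness of all three estimates is exhibited by a single function: the extremal $\tilde f_q$ of \eqref{qsharp}, which corresponds to the Schwarz function $w(z) = z$, so that $b_1 = 1$ and $b_n = 0$ for $n \geq 2$. Direct substitution into the coefficient formulas shows that $\tilde f_q$ achieves equality simultaneously in each of the three inequalities in \eqref{3.1}.
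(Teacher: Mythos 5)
Your proposal follows essentially the same route as the paper's own proof: the same coefficient identities $a_2=b_1/q$, $a_3=\bigl[qb_2+(1+q^2)b_1^2\bigr]/\bigl[q^2(1+q)\bigr]$ and the same expression for $a_4$, the same use of \Cref{lemma1} and \Cref{lemma_2} for $|a_2|$ and $|a_3|$, the same reduction of $|a_4|$ to $\tfrac{1}{q(1+q+q^2)}\bigl|b_3+\sigma b_1b_2+\nu b_1^3\bigr|$ with exactly the paper's $\sigma(q)$ and $\nu(q)$, and the same extremal function ($w(z)=z$, i.e.\ $\tilde f_q$ of \eqref{qsharp}/\eqref{sharp}) for sharpness.

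The one place you diverge is your objection that $(\sigma(q),\nu(q))$ need not lie in the region $D_1$ of \Cref{lemma_4}, and that objection is correct: $\sigma(q)\le 4$ is equivalent to $2q^3-2q^2-3q+2\le 0$, which fails for $q$ below the root $q_0\approx 0.573$ (for instance $\sigma(1/2)=13/3>4$). The paper's proof simply asserts ``$\sigma<4$ \dots for $q\in(0,1)$'' and therefore contains precisely the gap you flag; in that sense your write-up is more careful than the published argument. However, your proposed repair for $q<q_0$ (a two-variable boundary maximization in the spirit of \Cref{thmc4}) is left as a sketch, so your proof of the $|a_4|$ bound is itself incomplete on that range, and that route is heavier than needed. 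The clean fix is to observe that \Cref{lemma_4} is a truncated form of the Prokhorov--Szapiel theorem, which also yields $\bigl|b_3+\sigma b_1b_2+\nu b_1^3\bigr|\le\nu$ on the omitted region $|\sigma|\ge 4$, $\nu\ge\tfrac23(|\sigma|-1)$; here $\nu(q)-\tfrac23\bigl(\sigma(q)-1\bigr)$ has numerator $6-8q+12q^2+2q^3-3q^4+5q^5>0$ for all $q\in(0,1)$, so the stated bound for $|a_4|$ in \eqref{3.1} holds on the full parameter range once the complete lemma is invoked. With that substitution your argument closes the gap and proves the theorem as stated.
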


\begin{proof}
Let $f \in \mathcal{S}^*_{\xi_q}$. Then by virtue of \eqref{qstar}, there exists a Schwarz function $w(z) \in \mathcal{B}_0$ such that
\begin{align}
    \frac{z\, d_q f(z)}{f(z)} = \xi_q \bigl(w(z)\bigr). \notag
\end{align}
From \eqref{1.1}, we have
\begin{align}
    \frac{z\, d_q f(z)}{f(z)} 
    &= 1 + q a_2 z + \bigl[q(1 + q)a_3 - q a_2^2\bigr] z^2  \notag \\
    &\quad + \bigl[q(1 + q + q^2)a_4 - q(q + 2)a_2 a_3 + q a_2^3\bigr] z^3 + \cdots. \label{1.4.5}
\end{align}

Using \eqref{phi_q_series}, we get
\begin{align}
    \xi_q\bigl(w(z)\bigr) 
    &= 1 + b_1 z + \bigl(b_2 + b_1^2 q\bigr) z^2 
     + \biggl(b_3 + 2b_1 b_2 q + \frac{5}{6}b_1^3 q^2\biggr) z^3 + \cdots. \label{1.4.6}
\end{align}

By comparing the coefficients in \eqref{1.4.5} and \eqref{1.4.6}, we obtain
\begin{align}
    a_2 &= \frac{b_1}{q}, \label{a_2q}\\
    a_3 &= \frac{b_2 q + b_1^2 (1 + q^2)}{q^2 (1 + q)}, \label{a_3q}\\
    a_4 &= \frac{b_3 \tau_1 + b_1 b_2 \tau_2 + b_1^3 \tau_3}{6 q^3 (1 + q)(1 + q + q^2)}, \label{a_4q}
\end{align}
where
\begin{align*}
    \tau_1 &:= 6 q^2 (1 + q), &
    \tau_2 := 6 q (2 + q + 2q^2 + 2q^3), \\
    \tau_3 &:= 6 + 12q^2 + 6q^3 + 5q^4 + 5q^5.
\end{align*}
Since $w(z)$ is rotationally invariant, we may assume without loss of generality that $b_1 \ge 0$. Furthermore, since $|b_1| \le 1$, it follows that $b_1 \in [0,1]$. From \eqref{a_2q}, we have
\[
|a_2| = \frac{|b_1|}{q} \le \frac{1}{q}.
\]

Applying ~\Cref{lemma_2} to \eqref{a_3q}, we obtain
\[
|a_3| = \left| \frac{b_1^2 (1 + q^2) + (1 - b_1^2) q \alpha}{q^2 (1 + q)} \right|
       \le \frac{1 + q^2}{q^2 (1 + q)}.
\]

Rearranging the terms in \eqref{a_4q}, we can write
\[
|a_4| = \frac{1}{q(1 + q + q^2)} \, \bigl|b_3 + \sigma b_1 b_2 + \nu b_1^3 \bigr|,
\]
where
\[
\sigma := \frac{2 + q + 2q^2 + 2q^3}{q(1 + q)}, \qquad
\nu := \frac{6 + 12q^2 + 6q^3 + 5q^4 + 5q^5}{6q^2(1 + q)}.
\]

By ~\Cref{lemma_4}, it follows that $\sigma < 4$ and $\nu > \frac{1}{12}(\sigma^2 + 8)$ for $q \in (0,1)$. Hence,
\[
|a_4| \le \frac{6 + 12q^2 + 6q^3 + 5q^4 + 5q^5}{6 q^3 (1 + q)(1 + q + q^2)}.
\]

Thus, using \eqref{df'}, we verify that the bounds in \eqref{3.1} are sharp, since equality is attained for the extremal function 
$f_1 : \mathbb{D} \to \mathbb{C}$, given by , 
\begin{align}
     f(z) * \frac{z}{(1 - qz)(1 - z)} = f(z) \cdot \xi_q(z).  \label{sharp}
\end{align}
This completes the proof.
\end{proof}

We now proceed to estimate the Fekete-Szeg\"o bound:
%%%%%%%%%%%%%%%%%%-theorem-2-FEKETE-SZEGO--
\begin{theorem} \label{thm2}
Let $f \in \mathcal{S}^*_{\xi_q}$, then
\[
|a_3 - \mu a_2^2| \leq \frac{1}{q(1+q)} \max\left\{ 1, \left| \mu(1+q) - (1+q^2) \right| \right\},\quad \mu \in \mathbb{C}.
\label{3.18}
\]
\end{theorem}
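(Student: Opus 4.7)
The plan is to mirror the strategy of \Cref{thmc2} applied to the $q$-analogue. First I would use the coefficient formulas \eqref{a_2q} and \eqref{a_3q} to write the Fekete-Szeg\"o functional in terms of the Schwarz coefficients $b_1, b_2$. A direct substitution and collection of $b_1^2$ terms gives
\[
a_3 - \mu a_2^2 \;=\; \frac{b_2\, q + b_1^2(1+q^2)}{q^2(1+q)} - \frac{\mu\, b_1^2}{q^2} \;=\; \frac{1}{q(1+q)}\!\left[b_2 + \frac{(1+q^2) - \mu(1+q)}{q}\, b_1^2\right].
\]

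Next I would pass to a Carath\'eodory function $p(z)\in\mathcal{P}$ via the relation \eqref{2.4.16}, using \eqref{2.4.17} to substitute $b_1 = c_1/2$ and $b_2 = (2c_2 - c_1^2)/4$. After cleaning up the constants, the bracketed quantity reshapes as $\tfrac{1}{2}(c_2 - \mu' c_1^2)$ with
\[
\mu' \;=\; \frac{\mu(1+q) - 1 + q - q^2}{2q},
\]
so that
\[
a_3 - \mu a_2^2 \;=\; \frac{1}{2q(1+q)}\bigl(c_2 - \mu'\, c_1^2\bigr).
\]
Then I would apply \Cref{lemma_3} to obtain $|c_2 - \mu' c_1^2|\le 2\max\{1, |\mu'-1|\}$; a one-line calculation gives
\[
\mu' - 1 \;=\; \frac{\mu(1+q) - (1+q+q^2)}{2q},
\]
and multiplying the bound by the prefactor $1/(2q(1+q))$ yields precisely the claimed inequality.

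I expect no conceptual obstacle here, since the argument is structurally identical to the classical proof. The only place that requires care is the algebraic bookkeeping in step two: one must track the factor $q$ in the denominator so that the shift $\mu' \mapsto \mu'-1$ collapses to the compact form $(\mu(1+q)-(1+q+q^2))/(2q)$ rather than a messier equivalent. As a sanity check, taking $q\to 1^-$ should recover $\max\{1,|2\mu-3|/2\}/2$ of \Cref{thmc2}, which it does since $(1+q+q^2)/(2q)\to 3/2$ and $\mu(1+q)/(2q)\to \mu$. The assertion is stated without a sharpness claim, which is consistent with the classical counterpart, so no extremal function construction is needed at this stage.
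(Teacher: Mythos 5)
Your proposal is correct and follows essentially the same route as the paper's own proof: express $a_3-\mu a_2^2$ via \eqref{a_2q}--\eqref{a_3q}, convert to Carath\'eodory coefficients through \eqref{2.4.16}--\eqref{2.4.17}, and apply \Cref{lemma_3} with exactly the same parameter $\mu'=\frac{\mu(1+q)-1+q-q^2}{2q}$, so that $|\mu'-1|$ collapses to $\left|\frac{\mu(1+q)-(1+q+q^2)}{2q}\right|$. Your algebra and the limiting check as $q\to1^-$ are both consistent with the paper.
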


\begin{proof}
Let $f \in \mathcal{S}^*_{\xi_q}$. Using \eqref{a_2q} and \eqref{a_3q}, we have
\begin{align}
    |a_3 - \mu a_2^2|
    &= \left| \frac{b_2 q + b_1^2 (1 + q^2)}{q^2 (1 + q)} - \mu\frac{b_1^2}{q^2} \right|.
    \label{2.4.15}
\end{align}

By expressing \eqref{2.4.15} in terms of the coefficients $c_i$ ($i=1,2$) using \eqref{2.4.17} and subsequently applying \Cref{lemma_3}, we obtain

\begin{align*}
    |a_3 - \mu a_2^2|
    &= \left| \frac{2c_2 q + (1 - q + q^2)c_1^2}{4q^2(1+q)} 
    - \mu \frac{c_1^2}{4q^2} \right| \\
    &\leq \frac{1}{2q(1+q)} 
    \left| c_2 - 
    \left( \frac{\mu(1+q) - 1 + q - q^2}{2q} \right) c_1^2 \right| \\
    &\leq \frac{1}{q(1+q)} 
    \max \left\{ 
        1, 
        \left| \mu(1+q) - (1+q^2) \right| 
    \right\}, 
    \qquad \mu \in \mathbb{C}.
\end{align*}

Hence, the desired inequality follows.
\end{proof}

By setting $\mu = 1$ in \Cref{thm2}, we obtain the following sharp result:
%------------COROLLARY-H2,1
\begin{corollary} \label{corq}
Let $f \in \mathcal{S}_{\xi_q}^*$, then
\begin{align}
    |a_3 - a_2^2| \leq \frac{1}{q(1+q)}. \notag
\end{align}
    Above inequality is sharp due to the function $f_2 : \mathbb{D} \to \mathbb{C}$, given by 
\begin{align}
    f_2(z):=f(z) * \frac{z}{(1-qz)(1-z)} = f(z) \cdot \xi_q(z^2). \label{sharp1}
\end{align}
\end{corollary}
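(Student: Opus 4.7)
The plan is to derive the bound as a direct specialization of \Cref{thm2} at $\mu=1$. Setting $\mu=1$ in the general Fekete--Szeg\"o estimate, the quantity inside the absolute value becomes
\[
\frac{(1+q)-(1+q+q^2)}{2q}=-\frac{q}{2},
\]
so the bound collapses to
\[
|a_3-a_2^2|\leq \frac{1}{q(1+q)}\max\left\{1,\frac{q}{2}\right\}.
\]
Since $q\in(0,1)$ forces $q/2<1$, the maximum equals $1$, yielding the claimed inequality $|a_3-a_2^2|\leq \tfrac{1}{q(1+q)}$. This step is purely algebraic and requires nothing beyond simplification of the $\mu$-dependent term in \Cref{thm2}.

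For sharpness I would interpret the convolution identity \eqref{sharp1} defining $f_1$. By \eqref{df'} we have $f_1(z)*\frac{z}{(1-qz)(1-z)}=z\,d_q f_1(z)$, so \eqref{sharp1} is equivalent to
\[
\frac{z\,d_q f_1(z)}{f_1(z)}=\xi_q(z^2),
\]
which shows $f_1\in\mathcal{S}^*_{\xi_q}$ with Schwarz function $w(z)=z^2$, i.e.\ $b_1=0$, $b_2=1$, $b_3=0,\ldots$. Substituting into \eqref{a_2q} and \eqref{a_3q} gives $a_2=0$ and $a_3=\tfrac{1}{q(1+q)}$, so $|a_3-a_2^2|=\tfrac{1}{q(1+q)}$, confirming sharpness. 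I do not anticipate any real obstacle: the inequality is a one-line consequence of \Cref{thm2}, and the only subtlety—checking that the constant term, not the $\mu$-dependent term, determines the maximum—is automatic for $q\in(0,1)$.
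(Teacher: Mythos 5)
Your proposal is correct and follows the same route as the paper: the bound is obtained by setting $\mu=1$ in \Cref{thm2} (where the $\mu$-dependent term reduces to $q/2<1$, so the maximum is $1$), and sharpness comes from the function $f_1$ of \eqref{sharp1}, which via \eqref{df'} satisfies $z\,d_qf_1(z)/f_1(z)=\xi_q(z^2)$, giving $b_1=0$, $b_2=1$, hence $a_2=0$ and $a_3=1/(q(1+q))$. Your explicit verification of the extremal coefficients is a small but welcome addition to what the paper leaves implicit.
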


\noindent
Note that, if $f \in \mathcal{S}^*_{\xi_q}$, then the second Hankel determinant satisfies
\begin{align*}
    |\mathcal{H}_{2,1}(f)| = |a_1 a_3 - a_2^2| \leq \frac{1}{q(1+q)}, \quad \text{where } a_1 = 1.
\end{align*}

We now obtain the sharp bound for the second order Hankel determinant:
%%%%%%%%%%%%%%%%%%%%--theorem-3-|thm3|--
\begin{theorem}\label{thm3}
     If $f \in \mathcal{S}^*_{\xi_q}$, then
    \begin{align}
        |\mathcal{H}_{2,2}(f)| \leq \frac{1}{q^2 (1+q)^2}. \label{h22}
    \end{align}
The estimate is sharp.
\end{theorem}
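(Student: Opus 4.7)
The plan is to follow closely the strategy of \Cref{thmc3}, adapted to the $q$-dependent coefficient formulas \eqref{a_2q}--\eqref{a_4q}. The first step is to substitute these expressions into $H_{2,2}(f) = a_2 a_4 - a_3^2$, obtaining an explicit polynomial $P(b_1,b_2,b_3;q)$ times the prefactor $1/[q^3(1+q)^2(1+q+q^2)]$. In the limit $q \to 1^{-}$ this must reproduce the integrand appearing in the proof of \Cref{thmc3}, which serves as a useful computational sanity check.

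Next I would apply \Cref{lemma_2} to write $b_2 = \alpha(1-b_1^2)$ and $b_3 = (1-b_1^2)\bigl[(1-|\alpha|^2)\beta - b_1 \alpha^2\bigr]$, reducing the problem to a function of $b_1 \in [0,1]$ (assuming $b_1 \geq 0$ by rotational invariance) together with $\alpha, \beta \in \overline{\mathbb{D}}$. The endpoint cases $b_1 = 0$ and $b_1 = 1$ are handled directly: at $b_1 = 0$ the determinant collapses to a constant multiple of $|\alpha|^2$ that achieves the claimed bound exactly when $|\alpha|=1$; at $b_1=1$ all terms containing $1-b_1^2$ vanish, leaving a single $b_1^4$-contribution whose magnitude is strictly smaller than $1/[q^2(1+q)^2]$.

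For the interior range $b_1 \in (0,1)$ I would apply the triangle inequality together with $|\beta| \le 1$ to cast the estimate in the form
\[
|H_{2,2}(f)| \leq \frac{b_1(1-b_1^2)}{q^3(1+q)^2(1+q+q^2)} \, Y_q(A,B,C),
\]
with $Y_q(A,B,C) = |A + B\alpha + C\alpha^2| + 1 - |\alpha|^2$, where $A$, $B$, $C$ are explicit rational expressions in $b_1$ and $q$. After verifying that $AC \geq 0$ on the admissible range, \Cref{lemma5} splits the bound into two cases according to the sign of $|B| - 2(1-|C|)$; as in the classical proof, this quantity should be monotone on two subintervals separated by a (now $q$-dependent) threshold $b_1^{*}(q)$. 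Sharpness of the final bound would be confirmed through the extremal function $f_1$ in \eqref{sharp1}, for which $a_2 = 0$ and $a_3 = 1/[q(1+q)]$, giving $|a_2 a_4 - a_3^2| = 1/[q^2(1+q)^2]$.

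The main obstacle will be showing that the piecewise $Y_q$-estimate is dominated by $1/[q^2(1+q)^2]$ uniformly on both subintervals \emph{and} uniformly in $q \in (0,1)$. In the classical proof this comparison reduces to a numerical check with explicit constants, whereas here $A$, $B$, $C$ and the threshold $b_1^{*}(q)$ are genuine algebraic functions of $q$. I would tackle this either by extracting $q$-free monotonicity in $b_1$ for each fixed $q$ and then comparing endpoints, or by rearranging the difference $1/[q^2(1+q)^2] - Y_q(A,B,C)\cdot b_1(1-b_1^2)/[q^3(1+q)^2(1+q+q^2)]$ into a manifestly nonnegative rational expression in $b_1$ and $q$, so that the worst case is pinned down at $b_1 \to 0$, which is exactly the configuration realized by the extremal function $f_1$.
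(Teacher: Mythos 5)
Your outline follows the same route as the paper's proof: substitute \eqref{a_2q}--\eqref{a_4q} into $a_2a_4-a_3^2$, parametrize $b_2,b_3$ via \Cref{lemma_2}, treat $b_1\in\{0,1\}$ separately, bound the interior case with the triangle inequality and \Cref{lemma5}, and verify sharpness with $f_1$ of \eqref{sharp1} (your computation $a_2=0$, $a_3=1/(q(1+q))$, $a_4=0$ is exactly the extremal configuration used there). Your endpoint analysis is also correct: at $b_1=0$ the determinant equals $|\alpha|^2/(q^2(1+q)^2)$, and at $b_1=1$ the value $(6-6q+7q^2-4q^3+q^4)/\bigl(6q^2(1+q)^2(1+q+q^2)\bigr)$ is indeed strictly below the target.

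However, the decisive step is left open. What you call ``the main obstacle'' --- showing that the \Cref{lemma5}-based bound on $b_1\in(0,1)$ is dominated by $1/(q^2(1+q)^2)$ uniformly in $q$ --- is precisely where all the work of this theorem lies, and you only sketch two possible strategies without executing either, so as it stands the argument is incomplete. Moreover, the structure you anticipate (a two-regime split at a $q$-dependent threshold $b_1^*(q)$, mirroring \Cref{thmc3}) is not what actually happens: for $\mathcal{S}^*_{\xi_q}$ one finds $AC\ge 0$ and $|B|-2(1-|C|)\ge 0$ for all $b_1,q\in(0,1)$, so only the first branch of \Cref{lemma5} is ever active, $Y(A,B,C)=|A|+|B|+|C|$, and the resulting quantity $\tfrac{b_1(1-b_1^2)}{q(1+q+q^2)}\,(|A|+|B|+|C|)$ is then shown to be strictly less than $1/(q^2(1+q)^2)$, with no case distinction in $b_1$ at all. (The paper supports the two sign conditions only by plots, so a fully rigorous completion would still require proving these inequalities in $b_1$ and $q$ --- your second proposed strategy, rearranging the difference into a manifestly nonnegative rational expression, is the natural way to do that.) A minor slip: the common prefactor after substitution is $1/\bigl(6q^2(1+q)^2(1+q+q^2)\bigr)$, not $1/\bigl(q^3(1+q)^2(1+q+q^2)\bigr)$; this does not affect the plan, but it matters once $A$, $B$, $C$ are written explicitly.
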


\begin{proof}
Let $f \in \mathcal{S}^*_{\xi_q}$, then from \eqref{Hankel_det} and \eqref{a_2q}-\eqref{a_4q}, we have
\begin{align}
    |\mathcal{H}_{2,2}(f)|
    &= |a_2 a_4 - a_3^2|
    = \left|
        \frac{
            b_1 b_3 \tau_4 - b_2^2 \tau_5 + b_1^2 b_2 \tau_6 - b_1^4 \tau_7
        }{
            6 q^2 (1+q)^2 (1 + q + q^2)
        }
      \right|,
    \label{h2.21}
\end{align}
where
\begin{align*}
    \tau_4 &= 6(1+q)^2, &   \tau_5 &= 6(1+q+q^2),\\
  \tau_6& = 6(1 - q + 2q^2),& \tau_7 &= 6 - 6q + 7q^2 - 4q^3 + q^4.
\end{align*}

Using ~\Cref{lemma_2}, \eqref{h2.21} reduces to
\begin{align}
    |\mathcal{H}_{2,2}(f)|
    &= \tfrac{
        |-b_1^4 \tau_7
        + b_1^2 (1 - b_1^2) \tau_6 \alpha
        - (1 - b_1^2)^2 \tau_{5} \alpha^2
        + b_1 (1 - b_1^2) \tau_{4} (\beta( 1 - |\alpha|^2)-b_1 \alpha^2 )|
      }{
        6 q^2 (1+q)^2 (1+q+q^2)
      },
    \label{h2.22}
\end{align}
For $b_1 \in \{0,1\}$, \eqref{h2.22} simplifies to
\begin{align}
    |\mathcal{H}_{2,2}(f)| =
    \begin{cases}
        \displaystyle
        \frac{|\alpha|^2}{q^2(1+q)^2}
        \le \frac{1}{q^2(1+q)^2},
        & b_1 = 0,\; |\alpha| \le 1,\\[8pt]\\
        \displaystyle
        \frac{6 - 6q + 7q^2 - 4q^3 + q^4}{
            6 q^2 (1+q)^2 (1+q+q^2)
        },
        & b_1 = 1.
    \end{cases}
    \label{4.17}
\end{align}

For $b_1 \in (0,1)$, applying the triangle inequality to \eqref{h2.22} and using $|\beta| \le 1$, we obtain
\begin{align}
    |\mathcal{H}_{2,2}(f)|
    \le \frac{b_1(1 - b_1^2)}{q(1 + q + q^2)} \, Y(A,B,C),
    \label{h2.23}
\end{align}
where
\[
    Y(A,B,C) = \big(|A + B\alpha + C\alpha^2| + 1 - |\alpha|^2\big),
\]
and
\begin{align}
    A = -\frac{b_1^3 \tau_7}{(1 - b_1^2) \tau_4}, \quad
    B = \frac{b_1 \tau_6}{(1+q)^2}, \quad
    C = -\frac{(1 + q + b_1^2 q + q^2)}{b_1(1+q)^2}.\nonumber
    \label{h2.24}
\end{align}

From ~\Cref{lemma5}, we obtain
\begin{align*}
   \varphi_1(b_1,q):= AC &= \frac{
        b_1^2 (1 + q + b_1^2 q + q^2)
        (6 - 6q + 7q^2 - 4q^3 + q^4)
      }{
        6 (1 - b_1^2)(1+q)^4
      } \ge 0 \end{align*} 
and
\begin{align*}
    \varphi_2(b_1,q)&:=  |B| - 2(1 - |C|)
    \\&= \frac{
        -2b_1(1+q)^2 + 2(1+q+q^2) + b_1^2(1 + q + 2q^2)
      }{
        b_1(1+q)^2
      } \ge 0,
\end{align*}
which is evident from \Cref{fig3} and \Cref{fig4}.
\begin{figure}[ht!]
\centering
\begin{minipage}{0.48\textwidth}
    \centering
    \includegraphics[width=0.9\textwidth]{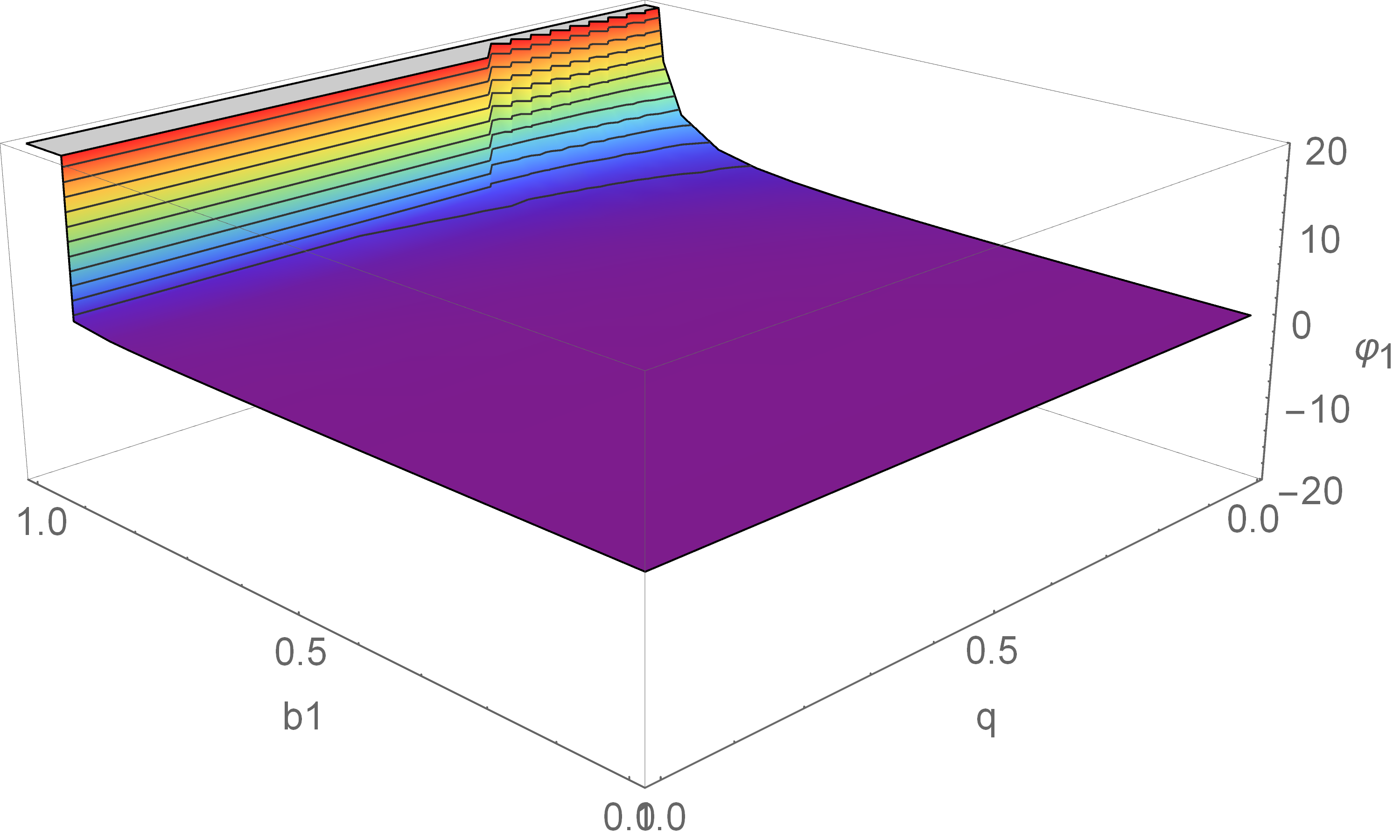}
    \caption{Plot of $\varphi_1(b_1, q)$ for $b_1, q \in (0,1)$.}
    \label{fig3}
\end{minipage}
\hfill
\begin{minipage}{0.48\textwidth}
    \centering
    \includegraphics[width=0.9\textwidth]{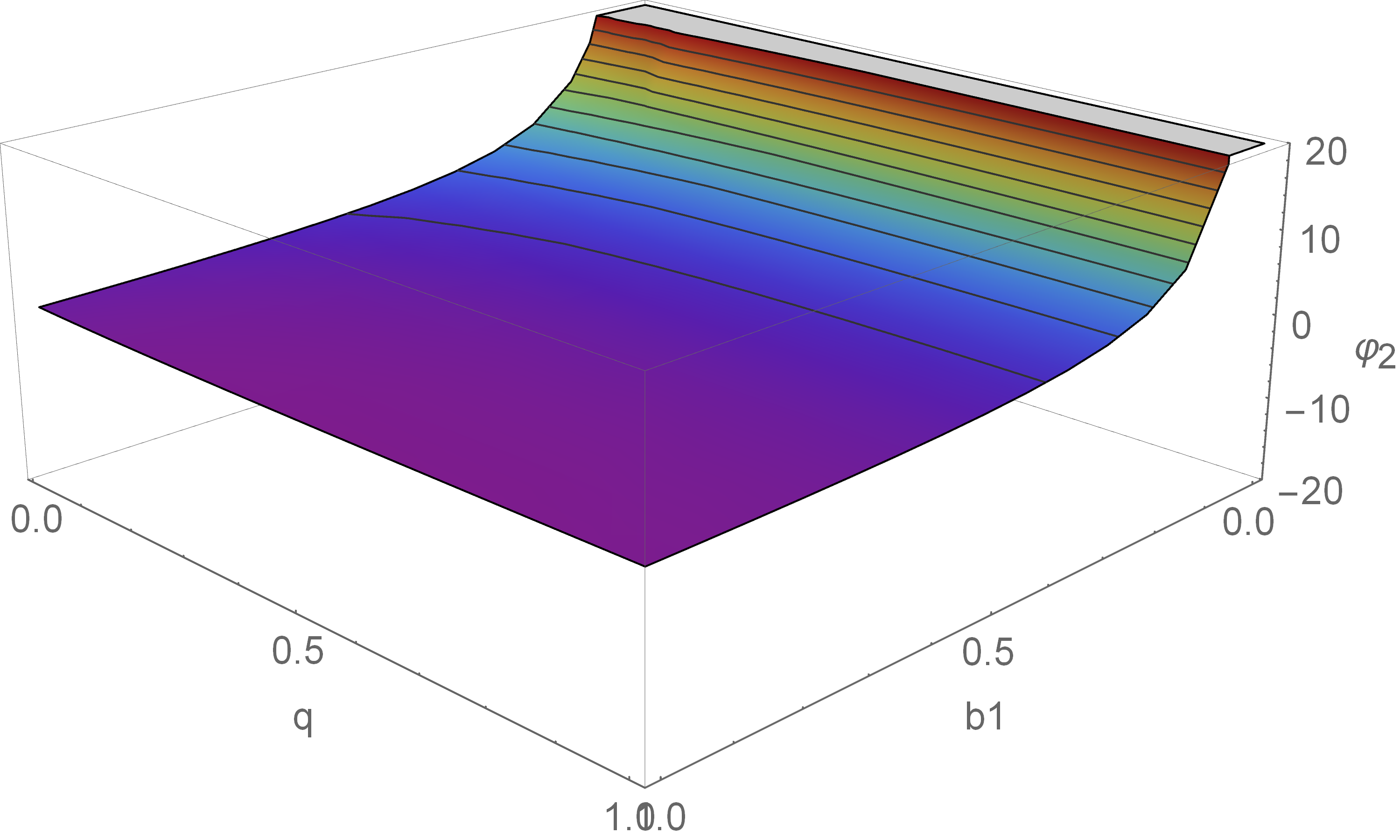}
    \caption{Plot of $\varphi_2(b_1, q)$ for $b_1, q \in (0,1)$.}
    \label{fig4}
\end{minipage}
\end{figure}

Thus, $|B| \ge 2(1 - |C|)$ for $q\in(0,1)$, which implies that
\[
    Y(A,B,C) = |A| + |B| + |C|.
\]
Therefore, \eqref{h2.23} simplifies to
\begin{align}
    |\mathcal{H}_{2,2}(f)|
    &\le \frac{b_1(1 - b_1^2)}{q(1+q+q^2)} (|A| + |B| + |C|)
    < \frac{1}{q^2(1+q)^2}.
    \label{4.20}
\end{align}

Combining \eqref{4.17} and \eqref{4.20} yields \eqref{h22}. 
Furthermore, the estimate is sharp, and equality holds for the extremal function $f_2$ defined in \eqref{sharp1}.
\end{proof}
%%%%%%%%%%%%%%%%%%%%%%%%%%%%%%%%%%%%%%%%%%%%%%%%%%%%%%%%%%%%%%%%%%%%%%%%%%%%%%%%%%%%%%%%%%%%%%%%%%%%%%%%%%%%%%%%%%%%%%%%%%%%%%%%%%%%%%%
%%%%%%%%%%%%%%%%%%%%%%%%%%%%%%%%%%%%%%%%%%%%%%%%%%%%%%%%%%%%%%%%%%%%%%%%%%%%%%%%%%%%%%%%%%%%%%%%%%%%%%%%%%%%%%%%%%%%%%%%%%%%%%%%%%%%%%%
We now proceed to the corresponding Toeplitz determinant bounds, beginning with the second-order Toeplitz determinant:
%%%%%%%%%%%%%%%%%%%%%%%%%%--toeplitz--t2,1
\begin{theorem} \label{thm4}
    If $f \in \mathcal{S}^*_{\xi_q}$, then
    \begin{align}
        |\mathcal{T}_{2,1}(f)| \leq 1+\frac{1}{q^2}, \qquad
        |\mathcal{T}_{2,2}(f)| \leq \frac{1}{q^2}+\frac{(1+q^2)^2}{q^4(1+q)^2} \qquad \text{and} \qquad |\mathcal{T}_{2,3}(f)| \leq \frac{1}{q^2 (1 + q)^2 }.\notag
    \end{align}
These estimates are sharp.

\end{theorem}

\begin{proof}
Let $f \in \mathcal{S}^*_{\xi_q}$. By substituting the values of $a_2$ and $a_3$ from \eqref{a_2q} and \eqref{a_3q} into 
\(
\mathcal{T}_{2,2} = a_2^2 - a_3^2,
\)
we obtain
\begin{equation}
    |\mathcal{T}_{2,2}(f)|
    = \left|
        \frac{b_1^2}{q^2}
        - \frac{(b_2 q + b_1^2 (1 + q^2))^2}{q^4 (1 + q)^2}
      \right|.
    \label{8.4.36}
\end{equation}
Further on applying the ~\Cref{lemma_2}, ~\eqref{8.4.36} reduces to
\begin{align*}
    |\mathcal{T}_{2,2}(f)|
    &\le\left|\frac{b_1^2}{q^2}
       + \frac{(q \alpha+ b_1^2 (1 - q + q^2\alpha))^2}{q^4 (1 + q)^2}\right| = \frac{1}{q^2}+\frac{(1+q^2)^2}{q^4(1+q)^2}.
\end{align*}

Now, to estimate the bound for $\mathcal{T}_{2,3}$, using \eqref{toeplitz_det}, \eqref{a_3q}, and \eqref{a_4q}, we obtain
\begin{align}
|\mathcal{T}_{2,3}(f)| &= |a_3^2 - a_4^2|
= \left| \frac{\Omega_1 - \Omega_2}{36 q^6 (1 + q)^2} \right|,
\label{t23.2}
\end{align}
where
\begin{align*}
\Omega_1:= 36q^2(b_2q+b_1^2(1+q^2))^2,\quad
\Omega_2:= \frac{(6b_3q^2\tau_{8}+b_1b_2\tau_{9}+b_1^3\tau_{10})^2}{\tau_{11}^2}
\end{align*}
and
\begin{align*}
\tau_{8}& := 1+q, & \tau_{9}&:=6 (2 + q (1 + 2 q (1 + q))),\\ \tau_{10}&:=6 + q^2 (12 + q (6 + 5 q (1 + q))), & \tau_{11}&:=1 + q + q^2.
\end{align*}

Using ~\Cref{lemma1}, \eqref{t23.2} reduces to
\begin{align*}
|\mathcal{T}_{2,3}(f)| = 
\frac{\Omega_3 - \Omega_4}{36 q^6 (1 + q)^2 },
\end{align*}
where
\begin{align*}
\Omega_3 &:= 36 q^2 (q + |b_1|^2 (1 - q + q^2))^2,\\
\Omega_4 &:=\displaystyle\frac{1}{\tau_{11}^2}\left(1-|b_1|^2-\frac{|b_2|^2}{1+|b_1|}\right)\tau_1+|b_1|(1-|b_1|^2)\tau_2+|b_1|^3\tau_{10}.
\end{align*}

Setting $x := |b_1|$ and $y := |b_2|$, we obtain $|\mathcal{T}_{2,3}(f)| \leq \Gamma(x, y)$, where
\begin{align*}
\Gamma(x, y) = \frac{\Omega_5 - \Omega_6}{36 q^6 (1 + q)^2},
\end{align*}
with
\begin{align*}
\Omega_5 &:= 36 q^2 (q + x^2 (1 - q + q^2))^2,\\
\Omega_6 &:=\displaystyle\frac{1}{\tau_{11}^2}\left(1-x^2-\frac{y^2}{1+x}\right)\tau_1+x(1-x^2)\tau_2+x^3\tau_{10}.
\end{align*}

By ~\Cref{lemma1}, we seek the maximum of $\Gamma$ over $\Delta$, given by \eqref{delta}). Initially, we consider the interior points of $\Gamma$. By considering $\partial \Gamma / \partial y = 0$, gives
\begin{align*}
    y=y_0:=\frac{\sqrt{\tau_{12}}}{q\sqrt{6(1+q)}}
\end{align*}
where
\begin{align*}
    \tau_{12}&:=6 x^3 (1 + x) + 5 q^5 x^3 (1 + x) - 12 q (-1 + x) x (1 + x)^2\\& +  q^4 x (12 + 12 x - 7 x^2 - 7 x^3) + 6 q^2 (1 + 2 x + x^4) \\&-  6 q^3 (-1 - 3 x - x^2 + 2 x^3 + x^4)
\end{align*}
For the existence of $y_0$, it should belong to $(0,1)$. However, in further estimation, we observe that there does not exist any $x \in (0,1)$. So, we find no critical points $(x_0, y_0)$ in the interior of $\Delta$. Thus, $\Gamma$ achieves its maximum at the boundary of $\Delta$. On the boundary, we have 
\begin{align*}
\Gamma(x, 0) &\leq 
\frac{\mathcal{M}}{36 q^6 (1 + q)(1 + q + q^2)^2}, \quad (0 \le x \le 1),\\
\Gamma(0, y) &\le \frac{1}{q^2 (1 + q)^2}, \quad (0 \le y \le 1),\\
\Gamma(x, 1 - x^2) &\le
\frac{\mathcal{M}}
{36 q^6 (1 + q)(1 + q + q^2)^2}, \quad (0 \le x \le 1),
\end{align*}
where $\mathcal{M}=36 - 36q + 144q^2 - 144q^3 + 168q^4 - 180q^5 + 48q^6 - 84q^7 - 11q^8 - 11q^9$. 
The bounds for $\mathcal{T}_{2,2}$ and $\mathcal{T}_{2,3}$ are sharp, equality is attained by the extremal function $f_3 : \mathbb{D} \to \mathbb{C}$, given by
\begin{align}
   f_3(z):= f(z) * \frac{z}{(1 - qz)(1 - z)} = f(z) \cdot \xi_q(iz).  \label{sharp iqz}
\end{align}
Moreover, the sharpness for $\mathcal{T}_{2,3}$ is also realized by the function $f_2$, given in \eqref{sharp1}.
\end{proof}

Next, we establish the third-order Toeplitz determinant using the methodology described earlier:
\begin{theorem} \label{thm7}
     If $f \in \mathcal{S}^*_{\xi_q}$, then
    \begin{align}
        |\mathcal{T}_{3,1}(f)| &\leq \frac{(1 - q)^4 (1 + 4 q + 5 q^2 + 4 q^3 + q^4)}{q^2(1+q)^2}\qquad \text{and}\label{t31}\\ 
        |\mathcal{T}_{3,2}(f)|& \leq \frac{\mathcal{M}_1(6 + 6 q^2 - 6 q^3 - 7 q^4 - q^5)}{36 q^9 (1 + q)^4 (1 + q + q^2)^2},\label{t32}
    \end{align}
where \(\mathcal{M}_1=12 + 12 q + 42 q^2 + 24 q^3 + 48 q^4 - 18 q^5 - 23 q^6 - 51 q^7 - 27 q^8 - 11 q^9\).  The estimate is sharp, with sharpness verified by the extremal function given in \eqref{sharp iqz} for \eqref{t31} and in \eqref{sharp} for \eqref{t32}, respectively.
\end{theorem}

\begin{remark}
It is observed that the  sharp bounds for the inverse coefficients of $g(w)=f^{-1}(w)$ where, $f\in\mathcal{S}^*_{\xi}$ and $\mathcal{S}^*_{\xi_q}$, coincide precisely with all the discussed coefficient problems for the class. The coincidence is not merely 
numerical in each case, sharpness is achieved by the same extremal functions, indicating 
that the extremal configuration of the class is preserved under inversion.
\end{remark}

\begin{conjecture}
Let $f \in \mathcal{S}^{*}_{\xi}$ and let $d$ and  denote the inverse coefficients of $f$. Then for all $n \geq 5$, the sharp bounds for $|d_n|$
 coincide with those for $|a_n|$. Moreover, the sharp estimates for the associated functionals formed from $d_n$ - coincide precisely with the established functionals within the class $\mathcal{S}^{*}_{\xi}$.
\end{conjecture}

\section*{Conclusion}
This study introduces two novel subclasses of starlike functions, $\mathcal{S}^*_{\xi}$ 
and $\mathcal{S}^*_{\xi_q}$, where $\mathcal{S}^*_{\xi}$ arises as the limiting case of 
$\mathcal{S}^*_{\xi_q}$ as $q \to 1^-$. Defined via subordination and $q$-calculus, the 
class $\mathcal{S}^*_{\xi_q}$ provides a natural $q$-analogue of the classical framework, 
for which sharp coefficient estimates are derived for the initial coefficients and all 
related determinants and functionals. Beyond their theoretical interest, the functionals studied here carry meaningful 
implications across several applied domains. In fluid dynamics, Fekete-Szeg\"{o} 
bounds ensure controlled geometric distortion in conformal mappings used for 
two-dimensional potential flow modeling. In signal processing and control 
theory, coefficient bounds guarantee stability and robustness of analytic transfer 
functions. In image processing and medical imaging, these estimates enable smooth, 
non-overlapping transformations and provide quantitative measures of shape complexity. 
Within quantum calculus itself, the $q$-starlike framework ensures structural stability 
of $q$-analogue formulations arising in discrete and quantum models.

\section*{Declarations}
\subsection{Author Contributions.}
All authors contributed equally to this work.

\subsection{Funding.}
Not applicable.
%%%%%%%%%%%%%%%%%%%%%%%%%%%%%%%%%%%%--
\bibliographystyle{plain}
\bibliography{refer}

@article{5,
  author       = {F. ~H. Jackson},
  title        = {On $q$-difference integrals},
  journal      = {Q. J. Pure Appl. Math.},
  volume       = {41},
  pages        = {193-203},
  year         = {1910}
}

@article{6,
  author       = {F.~ H. Jackson},
  title        = {$q$-difference equations},
  journal      = {Amer. J. Math.},
  volume       = {32},
  number       = {4},
  pages        = {305-314},
  year         = {1910}
}

@article{33,
  title={Starlikeness of certain non-univalent functions},
  author={A. Lecko and V. Ravichandran and A. Sebastian},
  journal={Anal. Math. Phys.},
  volume={11},
  number={4},
  pages={163},
  year={2021}
}

@article{8,
  title={Ma-Minda Starlikeness of Univalent Functions},
  author={A. Lecko and S. Madhumitha and V. Ravichandran},
  journal={Results Math.},
  volume={80},
  number={2},
  pages={44},
  year={2025}
}

@article{30,
  title={Coefficient problems on q-fractional integral operator defined by modified q-opoola differential operator},
  author={R. Bello and M. DARUS and K. Alshammari},
  journal={Korean J. Math.},
  volume={34},
  number={1},
  pages={1-11},
  year={2026}
}

@article{11,
  author       = {M.~ F. Khan and M. Abaoud},
  title        = {Coefficient inequalities and Hankel determinant for a new subclass of $q$-starlike functions},
  journal      = {J. Inequal. Appl.},
  volume       = {2025},
  number       = {1},
  pages        = {95},
  year         = {2025}
}

@article{1,
    author = { I. Akta\c{s} and H. Orhan},
    title = {Bounds for radii of convexity of some q-Bessel functions},
    journal = { Bull. Korean Math. Soc. },
     volume       = {57},
  number       = {2},
  pages        = {355-369},
  year         = {2020} 
}

@article{15,
  author       = {K. Piejko and J. Sokół},
  title        = {On convolution and $q$-calculus},
  journal      = {Bol. Soc. Mat. Mex.},
  volume       = {26},
  number       = {2},
  pages        = {349-359},
  year         = {2020}
}

@article{3,
  author       = {S. Giri},
  title        = {Second-order Toeplitz determinant for starlike mappings in one and higher dimensions},
  journal      = {Anal. Math. Phys.},
  volume       = {15},
  number       = {4},
  pages        = {1-17},
  year         = {2025}
}

@article{4,
  author       = {M.~ E.~ H. Ismail and E. Merkes and D. Styer},
  title        = {A generalization of starlike functions},
  journal      = {Complex Var. Theory Appl.},
  volume       = {14},
  number       = {1-4},
  pages        = {77-84},
  year         = {1990}
}

@article{9,
  author       = {S.~ S. Kumar and A. Tripathi and S. Pannu},
  title        = {On coefficient problems for $\mathcal{S}^*_{\rho}$},
  journal      = {Bull. Iran. Math. Soc.},
  volume       = {51},
  number       = {4},
  pages        = {50},
  year         = {2025}
}

@article{10,
  author       = {S. ~S. Kumar and P. Goel},
  title        = {Starlike functions and higher order differential subordinations},
  journal      = {RACSAM},
  volume       = {114},
  number       = {4},
  pages        = {192},
  year         = {2020}
}

@inproceedings{24,
  author       = {W. Ma and D. Minda},
  title        = {A unified treatment of some special classes of univalent functions},
  booktitle    = {Proc. Conf. Complex Analysis},
  year         = {1992}
}

@article{21,
  author       = {S.~ H. Hadi and M. Darus and R.~ W. Ibrahim},
  title        = {Hankel and Toeplitz determinants for $q$-starlike functions involving a $q$-analog integral operator and $q$-exponential function},
  journal      = {J. Funct. Space},
  volume       = {2025},
  number       = {1},
  pages        = {2771341},
  year         = {2025}
}

@article{2,
  author       = {S. Banga and S.~ S. Kumar},
  title        = {Sharp bounds of third Hankel determinant for a class of starlike functions and a subclass of $q$-starlike functions},
  journal      = {Khayyam J. Math.},
  volume       = {9},
  number       = {1},
  pages        = {38-53},
  year         = {2023}
}

@article{16,
  author       = {C. Pommerenke},
  title        = {On the Hankel determinants of univalent functions},
  journal      = {Mathematika},
  volume       = {14},
  number       = {1},
  pages        = {108-112},
  year         = {1967}
}

@article{20,
  author       = {K. ~Ye and L.~ H. Lim},
  title        = {Every matrix is a product of Toeplitz matrices},
  journal      = {FoCM},
  volume       = {16},
  number       = {3},
  pages        = {577-598},
  year         = {2016}
}

@article{12,
  author       = {W. Ma},
  title        = {Generalized Zalcman conjecture for starlike and typically real functions},
  journal      = {J. Math. Anal. Appl.},
  volume       = {234},
  number       = {1},
  pages        = {328-339},
  year         = {1999}
}

@article{29,
  author       = {F. Carlson},
  title        = {Sur les coefficients d’une fonction bornée dans le cercle unité},
  journal      = {Sur les Coefficients d’une Fonction Bornée dans le Cercle Unité},
  year         = {1939}
}

@book{17,
  author    = {B. Simon},
  title     = {Orthogonal Polynomials on the Unit Circle},
  year      = {2005},
  publisher = {American Mathematical Society}
}

@article{7,
  title={Univalent functions},
  author={P. Christian},
  journal={Vandenhoeck and Ruprecht},
  year={1975},
  publisher={Gottingen}
}

@article{14,
  author       = {D.~ V. Prokhorov and J. Szynal},
  title        = {Inverse coefficients for $(\alpha,\beta)$-convex functions},
  journal      = {Ann. UMCS Sect. A},
  volume       = {35},
  pages        = {125-143},
  year         = {1981}
}

@article{13,
  author       = {J.~ H. Choi and Y.~ C. Kim and T. Sugawa},
  title        = {A general approach to the Fekete-Szegö problem},
  journal      = {J. Math. Soc. Japan.},
  volume       = {59},
  number       = {3},
  pages        = {707-727},
  year         = {2007}
}

@article{32,
  title={Certain subclasses of meromorphic multivalent q-starlike and q-convex functions},
  author={Khan S. and Hussain S. and Darus M.},
  journal={Math. Slovaca.},
  volume={72},
  number={3},
  pages={635-646},
  year={2022},
  publisher={De Gruyter}
}

@article{appn1,
	author = {Rabha W. Ibrahim and Hamid A. Jalab and Faten Khalid Karim and Eatedal Alabdulkreem and Mohamad Nizam Ayub},
	title = {A medical image enhancement based on generalized class of fractional partial differential equations},
	journal = {QIMS},
	volume = {12},
	number = {1},
	year = {2021},
	issn = {2223-4306}}

@article{appn2,
  author    = {Manoj S. and Keerthi B. S.},
  title     = {A geometric function theoretic approach to MRI distortion correction using Miller Ross Poisson series},
  journal   = {Sci. Rep.},
  volume    = {16},
  number    = {1},
  pages     = {11639},
  year      = {2026}
}

\end{document}